\theoremstyle{plain} 
\newtheorem{thm}{\indent\sc Theorem}[section] 
\newtheorem{lem}[thm]{\indent\sc Lemma}
\newtheorem{prop}[thm]{\indent\sc Proposition}
\newtheorem*{cla}{\indent\sc Claim}
\theoremstyle{definition} 
\newtheorem{defi}[thm]{\indent\sc Definition}
\newtheorem*{rem}{\indent\sc Remark}
\def\address#1#2{\begingroup
	\noindent\parbox[t]{7.8cm}{%
		\small{\scshape\ignorespaces#1}\par\vskip1ex
		\noindent\small{\itshape E-mail address}%
		\/: #2\par\vskip4ex}\hfill%
	\endgroup}%
\title{\uppercase{Some relation between spectral dimension and Ahlfors regular conformal dimension on infinite graphs}} 
\author{
	%
	\textsc{K\^ohei Sasaya} 
}
\date{} 
\newcommand{\qs}{\sim_{\mathrm{QS}}}
\newcommand{\ard}{\dim_\mathrm{ARC}}
\newcommand{\diam}{\mathrm{diam}}
\newcommand{\ol}[1]{\overline{#1}}
\newcommand{\ul}[1]{\underline{#1}}
\newcommand{\Mc}[1]{\mathcal{#1}}
\newcommand{\Mb}[1]{\mathbb{#1}}
\newcommand{\Mr}[1]{\mathrm{#1}}
\renewcommand{\Cup}{\bigcup}
\renewcommand{\Cap}{\bigcap}
\newcommand{\scup}{\sqcup}
\newcommand{\sCup}{\bigsqcup}
\newcommand{\tr}{\triangle}
\newcommand{\mim}{\mathrm{Im}}
\newcommand{\mre}{\mathrm{Re}}
\newcommand{\Etkw}[1]{\mathcal{E}_{2,k,#1}}
\newcommand{\iu}{\sqrt{-1}}
\begin{document}

		\maketitle
\footnote{ 
	2020 \textit{Mathematics Subject Classification}.
	Primary 60J10; Secondary 30L10.
}
\footnote{ 
	\textit{Key words and phrases}.
	Spectral dimension, Ahlfors regular conformal dimension, random walk, resistance metric, quasisymmetry.
}
\footnote{ 
	This work was supported by JSPS KAKENHI Grant Number JP20J23120.
}
	\begin{abstract}
	The spectral dimension $d_s$ of a weighted graph is an exponent associated with the asymptotic behavior of the random walk on the graph. The Ahlfors regular conformal dimension $\ard$ of the graph distance is a quasisymmetric invariant, where quasisymmetry is a well-studied property of homeomorphisms between metric spaces. In this paper, we give a typical example of a fractal-like graph with $d_s<\ard<2$ and prove a sufficient condition for $\ard\le d_s<2.$
	\end{abstract}

\section{Introduction}
This paper aims to evaluate a dimension of an infinite graph as a metric space, defined through quasisymmetric transformations. Let $(X,d)$ be a metric space and $\mu$ be a Borel measure on it. We first recall the definition of quasisymmetry.
\begin{defi}[Quasisymmetry, \cite{Kig2020}]\label{defQS}
	Let $X$ be a set and $d,\delta$ be metrics on $X.$ We say $d$ is \emph{quasisymmetric} to $\delta,$ and write $d\qs\delta,$ if there exists a homeomorphism $\theta:[0,\infty)\to[0,\infty)$ such that for any $x,y,z\in X$ with $x\ne z,$
	\[\delta(x,y)/\delta(x,z)\le \theta \bigl(d(x,y)/d(x,z)\bigr).\]
\end{defi}
For example, $d\qs d^\alpha$ for any $\alpha\in(0,1)$ and a metric space $(X,d).$ The idea of this definition is that any annulus in $(X, d)$ is comparable to one in $(X, \delta)$. 
Beurling and Ahlfors~\cite{BA} implicitly introduced quasisymmetry as a property of a map from $\Mb{R}$ to $\Mb{R},$ and characterized it as the boundary value of a quasiconformal map from the upper half-plane to itself. Kelingos~\cite{Kel} named it quasisymmetry. Tukia and V\"ais\"ala~\cite{TV1980} generalized this notion for embedding maps from one metric space to another.
Quasisymmetry has been studied in various fields, such as Gromov hyperbolic spaces (\cite{BK,BP,MT,Pau} for example) and metric measure spaces (\cite{Hei2001, See2001} for example). Quasisymmetry is also used in studies of heat kernel estimates (see \cite{BCM,BM,KM2020, Kig2012,Mur} for example).\par
Ahlfors regularity and the Ahlfors regular conformal dimension are defined as follows.
\begin{defi}[Ahlfors regularity]\label{defAR}
	For $\alpha>0,$ we say $(X,d)$ is \emph{$\alpha$-Ahlfors regular} if there exists $C>0$ and a Borel measure $\mu$ such that 
	$C^{-1}r^\alpha\le \mu(B_d(x,r)) \le Cr^\alpha$ for any $x\in X$ and $r$ with $\inf_{y\in X\setminus\{x\}}d(x,y)\le r\le\diam(X,d)$
	where $B_d(x,r)=\{y\in X\mid d(x,y)<r\}$ and $\diam(X,d)=\sup_{x,y\in X}d(x,y).$ 
\end{defi}
\begin{defi}[Ahlfors regular conformal dimension]\label{defARC}
The \emph{Ahlfors regular conformal dimension} $\ard$ of $(X,d)$ is defined by 
 \[
	 \ard(X,d)=\inf\biggl\{\alpha\in(0,\infty) \biggm|  
	 \begin{minipage}{180pt}
	 	there exists an $\alpha$-Ahlfors regular metric $\delta$ on $X$ with $d\qs\delta$
	 \end{minipage} \biggr\}.\]
	where $\inf\emptyset=\infty.$
\end{defi}
Note that if $\inf_{x,y}d(x,y)=0$ or $\diam(X,d)=\infty,$ and $(X,d)$ is $\alpha$-Ahlfors regular then $(X,d)$ is not $\beta$-Ahlfors regular for $\beta\ne\alpha.$ In particular, $\alpha$ coincides with the Hausdorff dimension of $(X,d)$ if $(X,d)$ has no isolated points. Usually, we only consider the Ahlfors regularity and $\ard$ in the latter case. However, in the present paper, we consider the case of $\inf_{y}d(x,y)\ne0$ for some $x,$ to treat graph distances.\par
$\ard$ was implicitly introduced for continuous metric spaces by Bourdon and Pajot \cite{BP} and named by Bonk and Kleiner~\cite{BK}. In \cite{BK}, this exponent is related to Cannon's conjecture, which claims that for any hyperbolic group $G$ whose boundary is homeomorphic to the 2-dimensional sphere, there exists a discrete, cocompact, and isometric action of $G$ on the hyperbolic 3-space $\Mb{H}^3.$ Carrasco Piaggio~\cite{Car} characterized $\ard(X,d)$ as a critical value related to the \emph{combinatorial $p$-modulus} $\mathrm{Mod}_p$ of a family of curves $\Gamma$ in a graph $(V, E)$ (approximating $(X,d)$), where
\[\mathrm{Mod}_p(\Gamma)\coloneqq\inf\Bigl\{ \sum_{v\in V}f(v)^p \Bigm| f:V\to[0,\infty),\sum_{v\in\gamma}f(v)\ge1\text{ for any }\gamma\in\Gamma \Bigr\}.\] In recent studies, this characterization of $\ard$ has been also used to construct $p$-Sobolev spaces on fractals (see \cite{Kig2023, Shi}, cf. \cite{MS}).\par
In \cite{Kig2020}, Kigami introduced the notion of a partition satisfying the basic framework (BF-partition for short) and used it to evaluate the Ahlfors regular conformal dimension of compact metric spaces. The author considered this notion on infinite graphs and extended Kigami's results in \cite{Sas21}. Roughly speaking, a BF-partition of an infinite graph is a successive unification of vertices of the given graph with some good conditions. The simplest example is that $(V,E)=(\Mb{Z}_{\ge0},\{(n,m)\mid |n-m|=1\})$ and a BF-partition is defined as (a technical extension of) a map $K:\{(n,k)\mid n,k\ge0\}\to 2^\Mb{C}$ by $K(n,k)=\{m\in\Mb{Z}\mid 2^nk\le m\le 2^n{k+1}\}.$ For another non-trivial example, see the first part of Section \ref{secbf}. For a given partition $K: T\to V,$ we consider the corresponding graph structures in the same way as \cite{Kig2020} and \cite{Sas21}. Roughly speaking, we hierarchically divide $T$ into $\{T_n\}_{n} $ and for each $n$ consider graph structures on $T_n$ such that there is an edge between $w,v\in T_n$ if $K(w)\cap K(v)\ne\emptyset$ and $w\ne v.$ Then some potential theoretic exponents $\ol{d}^s_p(K),$ $\ul{d}^s_p(K)$ of this family of graphs, called the upper and lower $p$-spectral dimensions, for $p>0$ are defined. See Definitions \ref{bf} and \ref{Ce} for the precise definitions of a BF-partition and the $p$-spectral dimensions. For these exponents, the following theorem that is essentially induced by the results in \cite{Kig2020} holds.
\begin{thm}[{\cite[Theorem 4.14]{Sas21}}, cf. {\cite[Theorem 4.7.9]{Kig2020}}]\label{Smain-}Let $d$ be a metric on $V,$ satisfying some properties with respect to $K.$
	\begin{enumerate}
		\item If $p>\ard(V,d)$ then $p>\ol{d}^s_p(K)\ge\ul{d}^s_p(K)\ge \ard(V,d).$
		\item If $p\le\ard(V,d)$ then $p\le\ul{d}^s_p(K)\le\ol{d}^s_p(K)\le\ard(V,d).$
	\end{enumerate}
\end{thm}
See Theorem \ref{Smain} for the precise statement. We emphasize that the $p$-spectral dimensions are combinatorial exponents defined with a partition, and do not have any stochastic characterization. However, as pointed out in \cite{Kig2020} for the case of compact counterparts, if $(V,E)$ is the graphical Sierpi\'nski gasket (see Figure \ref{GSG}) or a graphical generalized Sierpi\'nski carpet (see Figure \ref{GGSC}), $K$ is a canonical partition and $d$ is the graph distance on $(V,E),$ then $\ol{d}^s_2(K)$ and $\ul{d}^s_2(K)$ coincide with the spectral dimension $d_s(V,E)$, defined as follows.
\begin{defi}[Spectral dimension]
	Let $(V,E)$ be a locally finite connected graph and $\mu: E\to(0,\infty)$ be a symmetric weight function (i.e. $\mu(x,y)=\mu(y,x)$ for any $(x,y)\in E$). We also inductively let $\mu(x)=\sum_{y:(x,y)\in E}\mu(x,y),$ $p(0,x,y)=\chi_{\{x\}}(y)/\mu(y)$ and $p(n,x,y)=(\sum_{z:(x,z)\in E}\mu{(x,z)}p(n-1,z,y))/\mu(x).$ The \emph{spectral dimension} of the weighted graph $(V,E,\mu)$ is defined by
	\[d_s(V,E,\mu)=-2\lim_{n\to\infty}(\log p(2n,x,x)/\log n),\]
	if the limit exists for some (or equivalently, for any) $x.$ We simply write $d_s(V,E)$ when $\mu$ is the simple weight i.e. $\mu\equiv1$ on $E.$
\end{defi}
\begin{figure}[tb]
	\centering
	\begin{minipage}[b]{0.45\linewidth}
		\centering
		\begin{tikzpicture}[scale=.15,rotate=210]
			\draw[dashed,thick] (0,0) -- (-5,0) ;
			\clip(0,-20)--(33,0)--(0,20)--cycle;
		\foreach \a in {0,1,2}{
			\foreach \b in {0,1,2}{
				\foreach \c in {0,1,2}{ 
					\foreach \d in {0,1,2}{
						\foreach \e in {0,1,2}{
							\foreach \f in {0,1,2}{
							\coordinate (p\a\b\c\d\e\f) at ({16*cos(120*\a)+8*cos(120*\b)+4*cos(120*\c)+2*cos(120*\d)+cos(120*\e)+cos(120*\f)},{16*sin(120*\a)+8*sin(120*\b)+4*sin(120*\c)+2*sin(120*\d)+sin(120*\e)+sin(120*\f)});
							\filldraw (p\a\b\c\d\e\f) circle [x radius=.2, y radius=.2];
}\draw (p\a\b\c\d\e0)--(p\a\b\c\d\e1)--(p\a\b\c\d\e2)--cycle;
}}}}}
\end{tikzpicture}
\vskip-16pt
		\caption{Part of the graphical Sierpi\'nski gasket}
		\label{GSG}
	\end{minipage}
\quad
	\begin{minipage}[b]{0.45\linewidth}
		\centering\vskip16pt
		\begin{tikzpicture}[scale=0.2]
				\draw[dashed,thick] (32,4.5) -- (36,4.5) ;
					\draw[dashed,thick] (13.5,15) -- (13.5,20) ;
			\clip(-.5,-.5)--(31.5,-.5)--(31.5,14.5)--(-.5,14.5)--cycle;
			\draw(0,0)grid (32,15);
	
\foreach \a in {0,1,2,3}{
	\foreach \b in {0,1}{
		\draw[fill=white] (9*\a+3,9*\b+3)--(9*\a+6,9*\b+3)--(9*\a+6,9*\b+6)--(9*\a+3,9*\b+6)--cycle;
		
}}
\draw[fill=white] (9,9)--(18,9)--(18,15)--(9,15)--cycle;
	\foreach \a in {0,...,32}{
	\foreach \b in {0,...,14}{
		\filldraw (\a,\b) circle  [x radius=0.15, y radius=0.15];
		
}}
\foreach \a in {0,1,2,3}{
	\foreach \b in {0,1}{
		\draw[white,fill=white] (9*\a+3.5,9*\b+3.5)--(9*\a+5.5,9*\b+3.5)--(9*\a+5.5,9*\b+5.5)--(9*\a+3.5,9*\b+5.5)--cycle;
		
}}
\draw[white,fill=white] (9.5,9.5)--(17.5,9.5)--(17.5,17.5)--(9.5,17.5)--cycle;
		\end{tikzpicture}\vskip-8pt
		\caption{Part of the (standard) graphical Sierpi\'nski carpet}
		\label{GGSC}
	\end{minipage}

\end{figure} 
We remark that the value of the limit is independent of the choice of $x$ because $(V,E)$ is connected. Since $p(n,x,y)\mu(y)$ coincides with the transition probability of the weighted random walk on $(V,E,\mu)$ starting from $x,$ the spectral dimension indicates the asymptotic behavior of the return probability of the random walk.\par
In this paper, we consider when same as the cases of the graphical Sierpi\'nski gasket or graphical generalized Sierpi\'nski carpets, the inequalities between the ``geometric dimension'' and the ``stochastic exponent'' $\ard(V,d)\le d_s(V,E,\mu)<2$ or $\ard(V,d)\ge d_s(V,E,\mu)\ge2$
hold for a weighted graph $(V,E,\mu)$ and the graph distance $d.$ (Recall that in the given examples, one of the inequality holds by $\ol{d}^s_2=d_s(V,E)$ and Theorem \ref{Smain-}.) We first construct a graph $(V,E)$ embedded in $\Mb{C}.$ This graph has spatial homogeneity in the following sense: let 
\begin{align}
	S_{n,a,b}=\{z\in \Mb{C}\mid 3^na\le\mre(z)\le3^n(a+1),\ 3^nb\le\mim(z)\le3^n(b+1)\}\label{inab},
\end{align}
then if $\Mr{int}(S_{n,a_j,b_j})\cap V\ne\emptyset$ for some $n,a_j,b_j\ (j=1,2),$ then the restriction of $(V,E)$ to $S_{n,a_1,b_1}$ and to $S_{n,a_2,b_2}$ are isometric (see Figure \ref{Gfstar}). \begin{figure}[tb]
	\centering
	\begin{tikzpicture}[scale=0.12]
		\foreach \d / \D in {0/0, 54/0, 27/27}{		
			\foreach \c / \C in {0/0, 18/0, 9/9, 0/18, 18/18}{	
				\foreach \b /\B in {0/0, 6/0, 3/3, 0/6, 6/6}{
					\foreach \a / \A in {0/0, 1/0, 2/0, 2/1, 2/2, 1/2, 0/2, 0/1}{
						\draw (\a+\b+\c+\d,\A+\B+\C+\D) -- (\a+\b+\c+\d+0.5,\A+\B+\C+\D+0.5) -- (\a+\b+\c+\d+1, \A+\B+\C+\D+1) ;
						\draw (\a+\b+\c+\d,\A+\B+\C+\D+1) -- (\a+\b+\c+\d+0.5,\A+\B+\C+\D+0.5) -- (\a+\b+\c+\d+1, \A+\B+\C+\D) ;
		}}}}
		\draw[dotted,thick] (55,55) -- (60,60) ;
		\draw[dotted,thick] (26,55) -- (21,60) ;
		\draw[dashed, ->, >=stealth] (-5, 0) -- (85,0) node[right] {$\mre$} ;
		\draw[dashed, ->, >=stealth] (0, -5) -- (0,60) node[above] {$\mim$} ;
		\coordinate[label=below left:0] (0) at (0,0);
	\end{tikzpicture}
	\caption{Part of $(V, E)$ around the origin}
	\label{Gfstar}
\end{figure} The first main result of this paper is to show that neither of the considered inequalities holds for this $(V,E)$ with the graph distance $d$ and the simple weight $\mu.$ See Theorem \ref{theex} for the precise statement.\par
The proof of Theorem \ref{theex} implies that some type of symmetry for scaling is sufficient to show $\ol{d}^s_2=d_s(V,E)$ and one of the considered inequalities. To justify this idea, we introduce the resistance on a weighted graph. 
\begin{defi}[Resistance]
	Let $(V',E',\mu)$ be a weighted graph, then for $A,B\subset V'$ with $A\cap B\ne\emptyset$ the \emph{resistance} between $A$ and $B$ is defined by
\[R_\mu(A,B)=\bigl(\inf\bigl\{\frac{1}{2}\sum_{(x,y)\in E'}(f(x)-f(y))^2\mu(x,y)\bigm\vert f:G\to\Mb{R}, f|_A\equiv1, f|_B\equiv 0\bigr\}\bigr)^{-1}\]
\end{defi}
It is known that $(R_\mu(A,B))^{-1}$ attains the minimum and $R_\mu(\{x\},\{y\})$ is the distance on $V'$ (see \cite{Kig2001} for example). Our second main theorem is the following.
\begin{thm}\label{theth}
	Let $(V',E')$ be an infinite, connected, locally finite graph, $d'$ be the graph distance of $(V',E')$ and $\mu$ be a weight on $E$ such that
	\begin{align}\tag{$p_0$}\label{p0}
		\text{for some }p_0>0,\ \mu(x,y)/\mu(x,z)\ge p_0 \text{ for any }x,y,z\in V'\text{ with }(x,y),(x,z)\in E.
	\end{align} If there exist $\alpha,\beta, C>0$ such that $\alpha+\beta>2,$
$C^{-1}d'(x,y)^\alpha\le R_\mu(\{x\},\{y\})\le Cd'(x,y)^\alpha$ and
		$C^{-1}n^\beta\le \mu(B_d(x,n))\le Cn^\beta$
	for any $x,y\in V'$ and $n\ge 0,$ then the limit $d_s(V',E',\mu)$ exists and
	\[\ard(V',d')\le d_s(V',E',\mu)<2.\]
\end{thm}
Here $d_s(V',E',\mu)<2$ follows from the assumption for $R_\mu(\{x\},\{y\}),$ so we emphasize that in this theorem we only treat the case that the associated random walk is recurrent. \par
We remark that in the forthcoming paper \cite{Sas23+} the author defines a variation $\ol{d_s}$ of $d_s$ and proves $\ard(X,\delta)\le \ol{d_s}<2$ when $(X,\delta)$ is a (continuous) low dimensional fractal, even if it is not symmetric for scaling. In \cite{Sas23+}, the counterpart of Theorem \ref{theex} is also proved using resistance estimates in the present paper.\par
The structure of this paper is as follows. We define our targeting graph $(V,E),$ state the first main result, and evaluate resistances on $(V,E)$ in Section \ref{secRes}. Section \ref{secbf} is devoted to introducing the notion of a BF-partition and related results, and then we evaluate $\ard(V,d)$ using these results in Section \ref{secARC}. Finally, we prove Theorem \ref{theth} in Section \ref{secSym}.
\subsection*{Acknowledgments}
I would like to thank my supervisor of the doctoral thesis, Professor Takashi Kumagai for helpful advice about the structure of this article.\par
This work was supported by Japan Society for the Promotion of Science (JSPS) KAKENHI Grant Number JP20J23120. 
\subsection*{Notation}

\begin{itemize}
	\item For a set $X,$ $\# X$ denotes the cardinality of $X.$
	\item $a\vee b$ (resp. $a\wedge b$) denotes $\max\{a,b\}$ (resp. $\min\{a,b\}$).
	\item Let $f,g$ be functions on a set $X$ and $A\subset X.$ We say $f\lesssim g$ (resp. $f\gtrsim g$) for any $x\in A$ if there exists $C>0$ such that $f(x)\le Cg(x)$ (resp. $f(x)\ge Cg(x)$) for any $x\in A.$ We also write $f\asymp g$ (for any $x\in A$) if $f\lesssim g$ and $f\gtrsim g.$
	\item Let $X$ be a set and $f:X\to X$ be a map, then we write $f^k$ instead of $\overbrace{f\circ\cdots\circ f}^{k}.$ Moreover, $f^{-k}$ denotes $(f^{-1})^k$ for $k>0.$
	\item For $A\subset \Mb{C}$ and $\alpha,\beta\in\Mb{C},$ $\alpha A+\beta$ denotes the set $\{\alpha z+\beta\mid z\in A\}.$
	\item $\scup_{\lambda\in\Lambda}A_\lambda$ denotes the disjoint union, that is,
	$\cup_{\lambda\in\Lambda}A_\lambda$ with $A_\lambda\cap A_\tau=\emptyset$ for any $\lambda, \tau\in\Lambda$ with $\lambda\ne\tau.$
	\item Let $\Theta$ be a variable defined by the minimum or maximum of some functions. We say $f$ is the optimal function for $\Theta$ if $f$ attains the minimum or maximum. For example, we say $f$ is the optimal function for $R_\mu(A,B)$ if $(R_\mu(A,B))^{-1}=(1/2)\sum_{(x,y)\in E}(f(x)-f(y))^2\mu(x,y),$ $f|_A\equiv 1$ and $f|_B\equiv 0.$
	\item Let $(V,E,\mu)$ be a weighted graph. We treat $\mu$ as a discrete measure on $V$ i.e. $\mu(A)\coloneqq\sum_{x\in A}\mu(x)=\sum_{x\in A}\sum_{y:(x,y)\in E}\mu(x,y)$ for $A\subset V.$
	\item $\mu_{(V,E)}$ denotes the simple weight on $(V,E).$ For simplicity, we write $R_{(V,E)}$ instead of $R_{\mu_{(V,E)}},$ same as the case of $d_s$.
	\item We abuse the notation $x$ instead of $\{x\}$ if no confusion may occur. For instance, we write $R(x,y)$ instead of $R(\{x\},\{y\}).$
\end{itemize}

\section{Resistance estimate}\label{secRes}

In this section, we construct a fractal-like infinite graph $(V,E)$ appearing in the first main result, evaluate resistances on $(V,E)$ using combinatorial arguments, and calculate $d_s(V,E)$ with the evaluation.\par
Let $S=\{z\mid |\mre(z)|\vee|\mim(z)|\le1/2\}\subset \Mb{C},$
\begin{align*}
	p_j=&\begin{cases}
		0&\text{if }j=0,\\
		\exp(j\pi\iu/4)/\sqrt{2}&\text{if }j=1,3,5,7,\\
		\exp(j\pi\iu/4)/2&\text{if }j=2,4,6,8,
	\end{cases}
\end{align*}
$\varphi_j(z)=(z-p_j)/3+p_j,\ \Phi_0(A)=\Cup_{j=0,1,3,5,7}\varphi(A),\ \Phi_1(A)=\Cup_{j=1}^8\varphi(A)$ for $A\subset\Mb{C}$ (see Figures \ref{pts}, \ref{Gn0}, \ref{Gn1}).
\begin{figure}[tb]
	\centering
	\begin{minipage}{0.32\linewidth}
		\centering
		\begin{tikzpicture}[scale=0.3]
			\draw[ ->, >=stealth] (-5, 0) -- (5,0) node[right] {$\mre$} ;
			\draw[, ->, >=stealth] (0, -5) -- (0,5) node[above] {$\mim$} ;
			\draw[dotted] (-4,-4) -- (4,-4) -- (4,4) -- (-4,4) -- cycle ;
			\coordinate[label=below left:$p_0$] (p0) at (0,0);
			\coordinate[label=below left:$p_1$] (p1) at (4,4);
			\coordinate[label=below left:$p_2$] (p2) at (0,4);
			\coordinate[label=below left:$p_3$] (p3) at (-4,4);
			\coordinate[label=below left:$p_4$] (p4) at (-4,0);
			\coordinate[label=below left:$p_5$] (p5) at (-4,-4);
			\coordinate[label=below left:$p_6$] (p6) at (0,-4);
			\coordinate[label=below left:$p_7$] (p7) at (4,-4);
			\coordinate[label=below left:$p_8$] (p8) at (4,0);
			\foreach \a in {0,1,...,8}
			\filldraw (p\a) circle [x radius=0.2, y radius=0.2];
			\coordinate[label=below: $\frac{1}{2}$] (set) at (4,0);
		\end{tikzpicture}
		\vskip-16pt
		\caption{$\{p_j\}_{j=0}^8$}
		\label{pts}
	\end{minipage}
	\begin{minipage}{0.32\linewidth}
		\centering\vskip16pt
		\begin{tikzpicture}[scale=0.3]
			\draw[dashed, ->, >=stealth] (-5, 0) -- (5,0) ;
			\draw[dashed, ->, >=stealth] (0, -5) -- (0,5) ;
			\draw[dotted] (-4,-4) -- (4,-4) -- (4,4) -- (-4,4) -- cycle ;
			\coordinate(p0) at (0,0);
			\coordinate (p1) at (4,4);
			\coordinate(p2) at (0,4);
			\coordinate (p3) at (-4,4);
			\coordinate (p4) at (-4,0);
			\coordinate (p5) at (-4,-4);
			\coordinate(p6) at (0,-4);
			\coordinate (p7) at (4,-4);
			\coordinate (p8) at (4,0);
			\foreach \a in {0,1,3,5,7}
			{\draw[fill=white]($(p\a)!0.3333!(-4,-4)$) --($(p\a)!0.3333!(4,-4)$) --($(p\a)!0.3333!(4,4)$) --($(p\a)!0.3333!(-4,4)$) --cycle;
			}
		\end{tikzpicture}\vskip-8pt
		\caption{$\Phi_0(S)$}
		\label{Gn0}
	\end{minipage}
	\begin{minipage}{0.32\linewidth}
		\centering\vskip16pt
		\begin{tikzpicture}[scale=0.3]
			
			\draw[dashed, ->, >=stealth] (-5, 0) -- (5,0)  ;
			\draw[dashed, ->, >=stealth] (0, -5) -- (0,5) ;
			\draw[dotted] (-4,-4) -- (4,-4) -- (4,4) -- (-4,4) -- cycle ;
			\coordinate(p0) at (0,0);
			\coordinate (p1) at (4,4);
			\coordinate(p2) at (0,4);
			\coordinate (p3) at (-4,4);
			\coordinate (p4) at (-4,0);
			\coordinate (p5) at (-4,-4);
			\coordinate(p6) at (0,-4);
			\coordinate (p7) at (4,-4);
			\coordinate (p8) at (4,0);
			\foreach \a in {1,...,8}
			{\draw[fill=white]($(p\a)!0.3333!(-4,-4)$) --($(p\a)!0.3333!(4,-4)$) --($(p\a)!0.3333!(4,4)$) --($(p\a)!0.3333!(-4,4)$) --cycle;
				;}
		\end{tikzpicture}\vskip-8pt
		\caption{$\Phi_1(S)$}
		\label{Gn1}
	\end{minipage}
\end{figure} We define $V_n\ (n\ge0)$ by 
\begin{align}
	V_0&=\{p_0, p_1, p_3, p_5, p_7\},\quad V_n=\Phi_{F(n)}\circ\cdots\circ\Phi_{F(1)}V_0\ (n\ge1)\notag,\\	
	\shortintertext{where}
	F(n)=&\begin{cases}
		1 & \text{if }k^2(k-1)<n\le k^3\text{ for some }k\in\Mb{N},\\
		0 & \text{otherwise},
	\end{cases}\notag
\end{align}
and a graph $(V,E)$ by
\begin{equation}\label{defve}
	V=\Cup_{n\ge0}3^n\biggl(V_n+\frac{1+\iu}{2}\biggr),\quad E=\{(x,y)\in V\times V \mid |x-y|=2^{-1/2}\}.
\end{equation}
(see Figure \ref{Gfstar}). For the rest of this paper, $(V,E)$ denotes the graph defined above and $d$ denotes the graph distance of $(V,E)$. Then our first main result is stated as follows.
\begin{thm}\label{theex}
	$$ d_s(V,E)=2\frac{\log5}{\log3+\log5}<\ard(V,d)=\ard(\Mr{SC},|\cdot|_\Mb{C})<2, $$
	where $\Mr{SC}$ is the unique nonempty compact subset of $\Mb{C}$ with $\Phi_1(\Mr{SC})=\Mr{SC},$ called the (standard) Sierpi\'nski carpet (see Figure \ref{FigSC}).
\end{thm}
\begin{rem}
	$2\log5/(\log3+\log5)=\ard(\Mr{Vic},|\cdot|_\Mb{C})$ where $\Mr{Vic}$ is the unique nonempty compact subset of $\Mb{C}$ with $\Phi_0(\Mr{SC})=\Mr{SC},$ called the Vicsek set or the Vicsek tree (see Figure \ref{FigVic}). On the other hand, the value of $\ard(\Mr{SC},|\cdot|_\Mb{C})$ is not known.
\end{rem}
\begin{figure}[tb]
	\centering
	\begin{minipage}[b]{0.45\linewidth}
		\centering
		\begin{tikzpicture}[scale=.015]
			\filldraw (0,0)--(0,243)--(243,243)--(243,0)--cycle;
				\foreach \a in {0,...,80}{
					\foreach \b in {0,...,80}{
				\draw[white,very thin,fill=white] (3*\a+1.2,3*\b+1.2)--(3*\a+1.2,3*\b+1.8)--(3*\a+1.8,3*\b+1.8)--(3*\a+1.8,3*\b+1.2)--cycle;
			}}
		\foreach \a in {0,...,26}{
			\foreach \b in {0,...,26}{
				\draw[fill=white] (9*\a+3,9*\b+3)--(9*\a+3,9*\b+6)--(9*\a+6,9*\b+6)--(9*\a+6,9*\b+3)--cycle;
		}}
		\foreach \a in {0,...,8}{
		\foreach \b in {0,...,8}{
			\draw[fill=white] (27*\a+9,27*\b+9)--(27*\a+9,27*\b+18)--(27*\a+18,27*\b+18)--(27*\a+18,27*\b+9)--cycle;
	}}
	\foreach \a in {0,...,2}{
	\foreach \b in {0,...,2}{
		\draw[fill=white] (81*\a+27,81*\b+27)--(81*\a+27,81*\b+54)--(81*\a+54,81*\b+54)--(81*\a+54,81*\b+27)--cycle;
}}
	\draw[fill=white] (81,81)--(81,162)--(162,162)--(162,81)--cycle;
		\end{tikzpicture}
		\caption{(Standard) Sierpi\'nski carpet }
		\label{FigSC}
	\end{minipage}
	\begin{minipage}[b]{0.45\linewidth}
		\centering
		\begin{tikzpicture}[scale=.015]
			\draw (-121.5,-121.5)--(121.5,121.5);
			\draw (121.5,-121.5)--(-121.5,121.5);
					\foreach \a in {0,1,3,5,7}{
							\foreach \b in {0,1,3,5,7}{
									\foreach \c in {0,1,3,5,7}{ 
											\foreach \d in {0,1,3,5,7}{
													\foreach \e in {1,3,5,7}{
								
																\coordinate (p\a\b\c\d\e) at 
										({(cos(180*\a)-1)*-.5*81*sqrt(2)*cos(45*\a)+(cos(180*\b)-1)*-.5*27*sqrt(2)*cos(45*\b)+(cos(180*\c)-1)*-.5*9*sqrt(2)*cos(45*\c)+(cos(180*\d)-1)*-.5*3*sqrt(2)*cos(45*\d)+.5*3*sqrt(2)*cos(45*\e)},{
										(cos(180*\a)-1)*-.5*81*sqrt(2)*sin(45*\a)+(cos(180*\b)-1)*-.5*27*sqrt(2)*sin(45*\b)+(cos(180*\c)-1)*-.5*9*sqrt(2)*sin(45*\c)+(cos(180*\d)-1)*-.5*3*sqrt(2)*sin(45*\d)+.5*3*sqrt(2)*sin(45*\e)});						
									}\draw (p\a\b\c\d1)--(p\a\b\c\d5);
								\draw (p\a\b\c\d3)--(p\a\b\c\d7);
								}}}}
		\end{tikzpicture}
		\caption{Vicsek set}
		\label{FigVic}
	\end{minipage}
\end{figure} 
Now we proceed to evaluate resistances on $(V,E)$ and other associated graphs. Let
\begin{align*}
	E_n&=\{(x,y)\in V_n\times V_n \mid |x-y|=3^{-n}2^{-1/2}\}\label{defEn}&&\\
	S_T&=\{z\in S \mid \mim(z)=1/2\},&S_B&=\{z\in S \mid \mim(z)=-1/2\},\\
	R_{n,\mathrm{TB}}&=R_{(V_n,E_n)}(V_n\cap S_T, V_n\cap S_B),&
	R_{n,\mathrm{Pt}}&=R_{(V_n,E_n)}(p_1,p_5).
\end{align*}

\begin{thm}\label{res}
	\begin{enumerate}
		\item $R_{n,\mathrm{TB}}\asymp R_{n,\mathrm{Pt}}$ for any $n\ge0.$
		\item Let $m_1(n)=\#\{k\mid k\le n, F(k)=1\}$ and $m_2(n)=\#\{k\mid k\le n, F(k)=1, F(k-1)=0\}$ for $n\ge1.$ 
		Then there exist $C_1,C_2>0$ and $\rho>1$ such that
		\[\rho^{m_1(n)}3^{n-m_1(n)}C_1^{m_2(n)}\le R_{n,\mathrm{Pt}}\le 2\rho^{m_1(n)}3^{n-m_1(n)}C_2^{m_2(n)}\text{ for any }n\ge0. \]
		\item 	There exists $M>0$ such that $R_{n,\mathrm{Pt}}\le R_{n+M,\mathrm{Pt}}/2$ for any $n\ge0.$
	\end{enumerate}
\end{thm} 
\begin{rem}
 Theorem \ref{res}.3 does not follows from Theorem \ref{res}.2. Indeed, we do not know whether $C_1$ equal to $C_2$ for Theorem \ref{res}.2.
\end{rem}
In order to prove the above theorem, we introduce known results about resistance estimate for the graphical Sierpi\'nski carpet and the idea of a flow on a graph. Let $V^{\mathrm{SC}}_0=\{p_1,p_3,p_5,p_7\}$ and $V^{\Mr{SC}}_n=(\Phi_1)^n(V^{\mathrm{SC}}_0).$ Similar to the case of $(V,E),$ we set
\begin{align*}
	E^{\Mr{SC}}_n&=\{(x,y)\in V^{\mathrm{SC}}_n\times V^{\mathrm{SC}}_n \mid |x-y|=3^{-n}\},\\
	R^{\mathrm{SC}}_{n,\Mr{TB}}&=R_{(V^{\mathrm{SC}}_n,E^{\mathrm{SC}}_n)}(V^{\mathrm{SC}}_n\cap S_T, V^{\mathrm{SC}}_n\cap S_B), \\
	R^{\mathrm{SC}}_{n,\Mr{Pt}}&=R_{(V^{\mathrm{SC}}_n,E_n^{\mathrm{SC}})}(p_1,p_5).
\end{align*}
We use the following fact.
\begin{prop}\label{resSC}
	For some $\rho>1,$ $\rho^n\asymp R^{\mathrm{SC}}_{n,\Mr{TB}}\asymp R_{n,\Mr{Pt}}^{\mathrm{SC}}$ for any $n\ge0.$
\end{prop}
\begin{rem}
	\begin{enumerate}
		\item $\rho^n\asymp R_{n,\Mr{TB}}^{\mathrm{SC}}$ follows in the same way as \cite{BB90}. On the other hand, we can not apply this method to show $\rho^n\asymp R_{n,\Mr{Pt}}^{\mathrm{SC}}$ because this method, especially a part of a potential theoretic argument in \cite[Section 4]{BB90}, uses the fact that the optimal function $g$ for $R_{n,\Mr{TB}}^{\mathrm{SC}}$ satisfies $g\equiv 1$ on $V_n^{\mathrm{SC}}\cap S_T$: we can not obtain $R_{n+k,\Mr{Pt}}^{\mathrm{SC}}\gtrsim R_{n,\Mr{Pt}}^{\mathrm{SC}} R_{k,\Mr{Pt}}^{\mathrm{SC}}$ but only $R_{n+k,\Mr{Pt}}^{\mathrm{SC}}\gtrsim R_{n,\Mr{Pt}}^{\mathrm{SC}} R_{k,\Mr{TB}}^{\mathrm{SC}}$ for any $n,k\ge0$ in this way. We will prove $R_{n,\Mr{TB}}^{\mathrm{SC}}\gtrsim R_{n,\Mr{Pt}}^{\mathrm{SC}}$ in Appendix \ref{app}.
		\item The heat kernel estimates of the simple random walks on graphical Sierpi\'nski carpets have been studied in \cite{BB99}, including transient cases without resistance estimates. We can also obtain $\rho^n\asymp R^{\mathrm{SC}}_{n,\Mr{Pt}}$ by \cite[Theorem 1.5]{BB99} and \cite[Theorem 1.3]{BCK2005}.
	\end{enumerate}	
\end{rem}

\begin{defi}[Unit flow]
	Let $(V',E')$ be a connected, locally finite graph. For $A,B\subset V'$ with $A\cap B=\emptyset,$ $f:E'\to \Mb{R}$ is called a unit flow from $A$ to $B$ if $f$ satisfies
	\begin{itemize}
		\item $f(x,y)=-f(y,x)$ for any $(x,y)\in E',$
		\item $\sum_{y:(x,y)\in E'}f(x,y)=0$ for any $x\not\in A\cup B,$
		\item $\sum_{x\in A}\sum_{y:(x,y)\in E'}f(x,y)=1$ and $\sum_{x\in B} \sum_{y:(x,y)\in E'}f(x,y)=-1.$
	\end{itemize}
	Let $\mu$ be a weight on $E',$ then it is known that
	\begin{equation}\label{fl}
		R_\mu(A,B)=\min\biggl\{\frac{1}{2}\sum_{(x,y)\in E'}\frac{f(x,y)^2}{\mu(xy)}\biggm\vert f\text{ is a unit flow from }A\text{ to }B\biggr\}.
	\end{equation}
	
	We say $f$ is an optimal flow for $R_\mu(A,B)$, or an optimal flow from $A$ to $B$ if $f$ is an optimal function for the right hand side of \eqref{fl}.\par
	Before the proof of Theorem \ref{res},  we show some technical lemmas.
\end{defi}
\begin{lem}\label{rpt}
	\begin{enumerate}
		\item $(1/4)R_{n,\Mr{Pt}}\le R_{(G_n,E_n)}(p_1,p_3)\le 4R_{n,\Mr{Pt}}$ for any $n\ge0.$
		\item $R_{n+1,\Mr{Pt}}\ge R_{n,\Mr{Pt}}$ for any $n\ge0.$ In particular, if $F(n+1)=0$ then $R_{n+1,\Mr{Pt}}=3R_{n,\Mr{Pt}}.$
	\end{enumerate}
\end{lem}

\begin{proof}Fix any $n\ge0.$ Let $g_{jk}$ be the optimal functions for $R_{(V_n,E_n)}(p_j,p_k)\ (1\le j,k\le 8).$ 
	\begin{enumerate} 
		\item By symmetry, $g_{13}(z)=1/2$ for any $z\in V_n$ with $\mre(z)=0.$ This and minimality of the potential assure $g_{13}(p_7)\ge1/2$ and $g_{13}(p_5)\le1/2,$ so 
		\[f_1(z):=(g_{13}(z)+g_{75}(z)-g_{13}(p_5))\wedge1\]
		satisfies $f_1(p_1)=f_1(p_7)=1$ and $f_1(p_3)=f_1(p_5)=0.$ Therefore
		\begin{align*}
			(R_{n,\Mr{Pt}})^{-1}\le R_{(V_n,E_n)}(\{p_1,p_7\},\{p_3,p_5\})^{-1}
			\le4R_{(V_n,E_n)}(p_1,p_3)^{-1}.
		\end{align*}
		On the other hand, since $g_{15}(z)=1/2$ for any $z\in V_n$ with $\mim(z)=-\mre(z)$, 
		\[R_{(V_n,E_n)}(p_1,p_3)^{-1}\le R_{(V_n,E_n)}(\{p_1,p_7\},\{p_3,p_5\})^{-1}\le4(R_{n,\Mr{Pt}})^{-1}\]
		similarly follows.
		\item Let $f_2: V_{n+1}\to\Mb{R}$ such that
		\[f_2(z)=\begin{cases}
			g_{15}(\varphi_1^{-1}(z)) & \text{if }\mre(z)+\mim(z)\ge\frac{2}{3}, \\
			g_{15}(\varphi_5^{-1}(z)) & \text{if }\mre(z)+\mim(z)\le-\frac{2}{3}, \\
			\frac{1}{2} & \text{otherwise.}
		\end{cases}\]
		Then $R_{n+1,\Mr{Pt}}\ge 2(\sum_{(x,y)\in E_{n+1}}(f_2(x)-f_2(y))^2)^{-1}=R_{n,\Mr{Pt}}$ follows immediately. If $F(n+1)=0,$ then it is easy to check that $R_{n,\Mr{Pt}}=R_{(V_{n+1},E_{n+1})}(p_1, \varphi_1(p_5))=R_{(V_{n+1},E_{n+1})}(\varphi_1(p_5), \varphi_5(p_1))=R_{(V_{n+1},E_{n+1})}(\varphi_5(p_1), p_5),$ so $R_{n+1,\Mr{Pt}}\le3R_{n,\Mr{Pt}}$ by the triangle inequality. On the other hand, let $f_3: V_{n+1}\to\Mb{R}$ defined by
			\[f_3(z)=\begin{cases}
			\frac{1}{3}g_{15}(\varphi_1^{-1}(z))+\frac{2}{3} & \text{if }z\in \varphi_1(S), \\
			\frac{1}{3}g_{15}(\varphi_0^{-1}(z))+\frac{1}{3} & \text{if }z\in \varphi_0(S), \\
			\frac{1}{3}g_{15}(\varphi_5^{-1}(z)) & \text{if }z\in \varphi_5(S), \\
			\frac{1}{2} & \text{otherwise.}
		\end{cases}\]
		Then $R_{n+1,\Mr{Pt}}\ge 2(\sum_{(x,y)\in E_{n+1}}(f_3(x)-f_3(y))^2)^{-1}=3R_{n,\Mr{Pt}}$ follows. 
	\end{enumerate}
\end{proof}
\begin{lem}\label{cur}
	Let $l=l(n):=\max\{m\mid m\le n, F(m)=0\},$ then
	$R_{n-l,\Mr{Pt}}^{\mathrm{SC}}R_{l,\Mr{Pt}}\gtrsim R_{n,\mathrm{Pt}}$
	for any $n.$
\end{lem}
\begin{proof}
	For any $x,y\in V_n$ with $(x,y)\in E^{\mathrm{SC}}_{n-l},$ there exists a unit flow $f_{xy}$ from $x$ to $y$ on $E_n$ such that $f_{xy}(z,w)=0$ for any $(z,w)\in E_n$ with $|x-z|>2^{1/2}3^{-(n-l)},$ and $(1/2)\sum_{(z,w)\in E_n}f_{xy}(z,w)^2\le 4R_{l,\Mr{Pt}}$ by Lemma \ref{rpt}.1. Let $g$ be the optimal flow for $R^{\mathrm{SC}}_{n-l,\Mr{Pt}}.$ We define $f:E_n\to \Mb{R}$ by 
	\[f(z,w)=\frac{1}{2}\sum_{(x,y)\in E^{\mathrm{SC}}_{n-l}}g(x,y)f_{x,y}(z,w),\]
	then we can see that $f$ is a unit flow from $p_1$ to $p_5.$ We also let
	\[L:=\sup_{n\ge0, z\in V_n}\#\{(x,y)\in E^{\mathrm{SC}}_{n-l}\mid  |x-z|\le2^{1/2}3^{-(n-l)}\},\]
	then
	\begin{align*}
		R_{n,\mathrm{Pt}}&\le\frac{1}{2}\sum_{(z,w)\in E_n}f(z,w)^2
		\le \frac{L}{8}\sum_{(x,y)\in E^{\mathrm{SC}}_{n-l} } g(x,y)^2 \sum_{(z,w)\in E_n}f_{xy}(z,w)^2\\
		&\le \frac{L}{2}R^{\mathrm{SC}}_{n-l,\Mr{Pt}}R_{l,\Mr{Pt}}.
	\end{align*}
	Since $L<\infty,$ the claim follows.
	\end{proof}

	\begin{proof}[Proof of Theorem \ref{res}]
		\begin{enumerate}
			\item It is sufficient to show $R_{n,\mathrm{TB}}\gtrsim R_{n,\mathrm{Pt}}$ for $n\ge1.$
			\begin{itemize}
				\item The case of $F(n)=0$: let $g_{n-1}$ be the optimal function for $R_{n-1,\Mr{Pt}}.$ Here we define $f_n:V_n\to\Mb{R}$ by
				\[f_n(z)=
				\begin{cases}
					1 & \text{if }z\in \varphi_1(S)\cup\varphi_3(S),\\
					g_{n-1}(3z)+g_{n-1}(-3\iu z)-\frac{1}{2} & \text{if }z\in\varphi_0(S),\\
					0 & \text{if }z\in \varphi_5(S)\cup\varphi_7(S).
				\end{cases}\]
				Then $(R_{n,\Mr{TB}})^{-1}\le(1/2)\sum_{(x,y)\in E_n}(f_n(x)-f_n(y))^2\le4(R_{n-1,\Mr{Pt}})^{-1}=12(R_{n,\mathrm{Pt}})^{-1}$ by Lemma \ref{rpt}.2, this completes the case. 
				\item The case of $F(n)=1$: let $l(n)$ be the same as in Lemma \ref{cur}. Using the former case, Proposition \ref{resSC} and Lemma \ref{cur}, we obtain
				$R^{\mathrm{SC}}_{n-l,\Mr{TB}}R_{l,\Mr{TB}}\gtrsim R^{\mathrm{SC}}_{n-l,\Mr{Pt}}R_{l,\Mr{Pt}}\gtrsim R_{n,\mathrm{Pt}}$ for any $n$ with $F(n)=1.$ On the other hand, we can get $R_{n,\mathrm{TB}}\gtrsim R^{\mathrm{SC}}_{n-l,\Mr{TB}}R_{l,\Mr{TB}}$ for any $n$ by the same potential theoretic argument as in \cite[Theorem4.3]{BB90}, which completes the proof.
			\end{itemize}
			\item We inductively prove the upper bound. By Theorem \ref{res}.1, Proposition \ref{resSC} and Lemma \ref{cur}, there exist $C_3,C_4>0$ such that  $C_3R^{\mathrm{SC}}_{n-l,\Mr{Pt}}R_{l,\Mr{Pt}}\ge R_{n,\mathrm{Pt}}$ for any $n\in F^{-1}(1)$ and $R^{\mathrm{SC}}_{k,\Mr{Pt}}\le C_4 \rho^k$ for any $k\ge0.$ We write $C_2=C_3C_4.$ If $F(n)=0,$ the claim for $n$ obviously follows from the claim for $n-1$ with Lemma \ref{rpt}.2. Otherwise,
				$R_{n,\mathrm{Pt}} \le C_3R^{\mathrm{SC}}_{n-l,\Mr{Pt}}R_{l,\Mr{Pt}} \le  
				2\rho^{m_1(l)}3^{l-m_1(l)}C_2^{m_2(l)}\cdot C_2\rho^{n-l} 
				=2\rho^{m_1(n)}3^{n-m_1(n)}C_2^{m_2(n)}$ by the claim for $l(n),$
			which concludes the proof of the upper bound. The lower bound also follows in a similar way.
			\item We first prove that there exists $m\ge1$ such that if $n\ge0$ satisfies $F(n+1)=\cdots=F(n+m)$ then $R_{n,\Mr{Pt}}\le R_{n+m,\Mr{Pt}}/2.$ Indeed, if $F(n+1)=0$ then $R_{n+m,\Mr{Pt}}\ge R_{n+1,\Mr{Pt}}=3R_{n,\Mr{Pt}}$ by Lemma \ref{rpt}.2. 
			Otherwise, let $l=l(n+m)=l(n)$ and $L$ be the same as in Lemma \ref{cur}, then
			\begin{align*}
				R_{n,\Mr{Pt}}&\le \frac{L}{2}R^{\mathrm{SC}}_{n-l,\Mr{Pt}}R_{l,\Mr{Pt}}\\
				&=\frac{L}{2}\frac{R_{l,\Mr{Pt}}}{R_{l,\Mr{TB}}}\frac{R_{n+m,\Mr{TB}}}{R_{n+m,\Mr{Pt}}}
				\frac{R^{\mathrm{SC}}_{n+m-l,\Mr{TB}}R_{l,\Mr{TB}}}{R_{n+m,\Mr{TB}}}
				\frac{R^{\mathrm{SC}}_{n-l,\Mr{Pt}}}{\rho^{n-l}}\frac{\rho^{n+m-l}}{R^{\mathrm{SC}}_{n+m-l,\Mr{TB}}}
				\rho^{-m}R_{n+m,\Mr{Pt}}
			\end{align*}
			by Lemma \ref{cur}. Therefore Theorem \ref{res}, Proposition \ref{resSC} and the potential theoretic argument shows that $R_{n,\Mr{Pt}}\le R_{n+m,\Mr{Pt}}/2$ if $m$ is sufficiently large. Fix such an $m\ge1,$ then by the definition of $F,$ for any $n\ge0$ there exists $k_n$ such that $n+m^3\le k_n<n+m^3+m$ and  $f(k_n+1)=\cdots=f(k_n+m).$ Therefore
			$R_{n,\mathrm{Pt}}\le R_{k_n,\Mr{Pt}}\le R_{k_n+m,\Mr{Pt}}/2\le R_{n+m^3+2m,\Mr{Pt}}/2$
			for any $n\ge0.$ 
		\end{enumerate}
	\end{proof}
Next, we evaluate resistance metrics between points and check some properties associated with the metric.
	\begin{prop}\label{nxy}
		For $x_1,x_2\in V,$ we denote by $n(x_1,x_2)$ the minimal integer such that there exist $a_j,b_j\in\Mb{Z}\ (j=1,2)$ with the property that 
		\begin{equation}\label{condition}
			x_j\in S_{n(x_1,x_2),a_j,b_j}, \ \Mr{int}(S_{n(x_1,x_2),a_j,b_j})\cap V\ne\emptyset,\ 
			S_{n(x_1,x_2),a_1,b_1}\cap S_{n(x_1,x_2),a_2,b_2}\ne\emptyset
		\end{equation}
		for $j=1,2,$ where $S_{n,a,b}$ is defined by \eqref{inab}. Then $R_{(V,E)}(x_1,x_2) \asymp R_{n(x_1,x_2),\Mr{TB}}$ for any $x,y\in V.$
	\end{prop}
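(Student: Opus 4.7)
Write $n := n(x_1,x_2)$ and let $I_j := I_{n,a_j,b_j}$ denote the level-$n$ cells from the definition. The key structural observation is that whenever the interior of a level-$m$ square $I_{m,a,b}$ meets $G^*$, the induced subgraph on $I_{m,a,b}\cap G^*$ is graph-isomorphic (via a similarity of $\Mb{C}$) to $(G_m(f_*),E_m(f_*))$, since $G^*$ is built by iterated inflation of the blocks $G_m(f_*)$. Combined with Theorem \ref{res} and Lemma \ref{rpt}, this reduces everything to finite-block estimates.

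\textbf{Upper bound.} Because $I_1\cap I_2\ne\emptyset$ and both cells have interiors meeting $G^*$, a direct inspection of the construction produces a common vertex $y\in G^*\cap I_1\cap I_2$ (a shared corner of the two level-$n$ squares). Subadditivity of resistance gives
\[
R^*(x_1,x_2)\le R^*(x_1,y)+R^*(y,x_2),
\]
and each summand is at most the diameter resistance of a copy of $G_n(f_*)$. By Lemma \ref{rpt} this diameter resistance is $\lesssim R^*_{n,pt}$, and by Theorem \ref{res}(1), $R^*_{n,pt}\asymp R^*_n$.

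\textbf{Lower bound.} Pick a level-$(n-1)$ cell $I'_1$ with interior meeting $G^*$ that contains $x_1$. The minimality of $n$ forces $I'_1$ to be disjoint from every level-$(n-1)$ cell (with interior meeting $G^*$) that contains $x_2$; in particular $x_2\in G^*\setminus I'_1$. Monotonicity of resistance with respect to the target set then yields
\[
R^*(x_1,x_2)\ge R^*(x_1,G^*\setminus I'_1),
\]
and the right-hand side equals the point-to-boundary resistance of $x_1$ inside $I'_1\cap G^*$, itself a scaled copy of $(G_{n-1}(f_*),E_{n-1}(f_*))$. Using the symmetry arguments underlying Lemma \ref{rpt}, this point-to-boundary resistance is $\gtrsim R^*_{n-1,pt}$. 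Finally, from the explicit two-sided bound in Theorem \ref{res}(2) one checks that the ratio $R^*_n/R^*_{n-1}$ is bounded by a constant independent of $n$, so $R^*_{n-1}\asymp R^*_n$ and the proof is complete.

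\textbf{Main obstacle.} The delicate point is the uniform point-to-boundary bound $R\gtrsim R^*_{n-1,pt}$ inside $G_{n-1}(f_*)$: a priori, if $x_1$ lay very close to $\partial I'_1$, the point-to-boundary resistance could be much smaller than $R^*_{n-1,pt}$. The minimality of $n$ rescues us by ensuring that $x_1$ cannot simultaneously sit on the boundary of a level-$(n-1)$ cell which meets a level-$(n-1)$ cell containing $x_2$, since a smaller value of $n$ would then fit the definition. Converting this geometric consequence of minimality into the desired uniform lower bound requires a case analysis of the possible positions of $x_1, x_2$ relative to the level-$(n-1)$ grid, and constitutes the main technical content of the proof.
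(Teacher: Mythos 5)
Your overall strategy (chain/subadditivity for the upper bound, a separating set for the lower bound) points in the right direction, but both halves have genuine gaps at exactly the places where the paper has to work.

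For the upper bound, you invoke ``the diameter resistance of a copy of $G_n(f_*)$'' and claim it is $\lesssim R^*_{n,pt}$ ``by Lemma \ref{rpt}.'' Lemma \ref{rpt} only compares specific corner-to-corner and side-to-side resistances; it says nothing about the resistance from an \emph{arbitrary} vertex of the block to a corner or to a shared boundary vertex $y$. The paper fills this by building, for each $x_j$, a multiscale chain $x_{j,0},x_{j,1},\dots,x_{j,n}$ through coarser and coarser sub-blocks, with $R^*(x_{j,k},x_{j,k-1})\le 4R^*_{k-1,pt}$, so that $R^*(x_1,x_2)\le 2+8\sum_{k=0}^{n}R^*_{k,pt}$. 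Turning this sum into $\lesssim R^*_{n,pt}$ requires the geometric growth $R^*_{n,pt}\le\tfrac12 R^*_{n+M,pt}$ of Lemma \ref{exp}, which is a nontrivial fact here precisely because the construction is inhomogeneous (the paper's remark after Lemma \ref{exp} points out that it does \emph{not} follow from the two-sided bound of Theorem \ref{res}(2), since the constants $C_1,C_2$ there may differ). For the same reason, your closing claim that ``from the explicit two-sided bound in Theorem \ref{res}(2) one checks that the ratio $R^*_n/R^*_{n-1}$ is bounded'' is not a direct read-off: the ratio of the upper bound at level $n$ to the lower bound at level $n-1$ contains a factor $(C_2/C_1)^{m_2(n)}$, which is unbounded if $C_1\ne C_2$; one needs the sub/super-multiplicative flow and potential estimates instead.

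For the lower bound, the gap you flag yourself is the real one, and your proposed fix does not work as stated: if $x_1$ lies on the boundary of \emph{every} level-$(n-1)$ cell containing it (e.g.\ a grid corner), then $R^*(x_1,G^*\setminus I_1')$ can be of order $1$, and no case analysis within a single cell repairs this. What minimality actually gives is an \emph{annulus}: the $2\times2$ block of level-$(n-1)$ cells around $x_1$ is non-adjacent to every cell containing $x_2$, so one can choose a $3^n\times 3^n$ square $\psi_{n,a_*,b_*}^{-1}(I)$ from the overlapping family positioned on the $3^{n-1}$-grid whose \emph{central ninth} contains $x_1$ while $x_2$ lies outside the whole square. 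The paper then constructs an explicit potential $H_2$ (equal to $1$ on the central ninth, $0$ off the square, obtained by taking the minimum over four rotations of a function built from the optimal crossing potential $h$ for $R^*_{n-1}$) with energy $\le 8(R^*_{n-1})^{-1}$, giving $R^*(x_1,x_2)\ge\tfrac18 R^*_{n-1}\gtrsim R^*_n$. You would need to replace your single-cell point-to-boundary argument with this (or an equivalent) annulus-crossing construction.
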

	\begin{proof}
		We first prove $R_{(V,E)}(x_1,x_2) \lesssim R_{n(x_1,x_2),TB}.$ By \eqref{condition}, for $j=1,2$ we can inductively choose $x_{j,k}$ for $0\le k\le n=n(x_1,x_2)$ such that $x_{j,k}\in V\cap  3^k\Mb{Z}^2$ and
		$$\begin{cases}
				|x_j-x_{j,0}|\le2^{-1/2} & \text{if }k=0, \\
				\begin{aligned}
					&\{x_{j,k-1},x_{j,k}\}\subset 3^{k-1}(V_{k-1}+\frac{1}{2}+a+(\frac{1}{2}+b)\iu)\subset V  \\
					&\text{ for some }a,b\in\Mb{Z} 
				\end{aligned}
				&\text{otherwise.}
			\end{cases}$$
	
		It is easy to check that 
		\[R_{(V,E)}(x_{j,k},x_{j,k-1})\le(R_{k-1,\Mr{Pt}}\vee R_{(V_{k-1},E_{k-1})}(p_1,p_3))\le4R_{k-1,\Mr{Pt}},\]
		therefore we obtain $R_{(V,E)}(x_1,x_2)\le 2+8\sum_{k=0}^{n}R_{k,\Mr{Pt}} \le 16MR_{n,\mathrm{Pt}}$
		by Theorem \ref{res}.3, so $R_{(V,E)}(x_1,x_2) \lesssim R_{n(x_1,x_2),\Mr{TB}}$ follows.\par
		We next show  $R_{(V,E)}(x_1,x_2) \gtrsim R_{n(x_1,x_2),\Mr{TB}}.$ By the definition of $n=n(x_1,x_2),$ there exist $a, b\in \Mb{Z}$ and an affine map $\psi$ such that $x_1\in S_{n-1,a,b},\ \psi(S_{n-1,a,b})=\varphi_0(S)$ and $\psi(x_2)\not\in S$
		\begin{figure}[tb]
			\centering
			\begin{tikzpicture}
				\begin{scope}
					\clip (-0.3,-0.3) -- (3.3,-0.3) -- (3.3,3.3) -- (-0.3,3.3) --cycle;
					\foreach \t in {-14,-13,...,21}{
						\path[draw] (0.15*\t, -0.3) -- (0.15*\t + 1.2, 3.3);
					}
					\filldraw[white](0,0) -- (3,0) -- (3,3) -- (0,3) --cycle;
					\begin{scope}
						\clip (1,1) -- (2,1) -- (2,2) -- (1,2) --cycle ;
						\foreach \t in {10,...,18}{
							\path[draw] (0.15*\t, -0.3) -- (0.15*\t - 1.2, 3.3);
						}
					\end{scope}
				\end{scope}
				
				\draw[step=1](-0.2,-0.2) grid (3.2,3.2);
				\coordinate (x0) at (1.3,1.7);
				\filldraw[white] ($(x0)-(0,0.05)$) circle [x radius=0.16, y radius=0.32];
				\filldraw (x0) circle [x radius=0.05, y radius=0.05];
				\draw (x0) node[below]{$x_1$};
				\filldraw[white] (1.8,1.2) circle [x radius=0.1, y radius=0.18];
				\draw (1.8,1.2) node{1};
				\coordinate (y) at (3.2,0.6);
				\filldraw[white] ($(y)-(0,0.05)$) circle [x radius=0.16, y radius=0.32];
				\filldraw (y) circle [x radius=0.05, y radius=0.05];
				\draw (y) node[below]{$x_2$};
				\filldraw[white] (2.6,-0.25) circle [x radius=0.12, y radius=0.2];
				\draw (2.6,-0.25) node{0};
			\end{tikzpicture}
			\caption{$\psi^{-1}(\varphi_0(S))$ and $\Mb{C\setminus}\psi^{-1}(S)$ with values of $f_2(z)$}
			\label{fignxy}
		\end{figure}
		(see  Figure \ref{fignxy}).
		Let $f_1$ be a function on $\cup_{j=0}^8\varphi_j(V_{n-1})$ defined by
	\[f_1(z) =\begin{cases}
				g(\varphi_j^{-1}(z)) & \text{if }z\in\varphi_j(V_{n-1})\text{ for }j=1,3,5,7,\\
				1 &\text{otherwise}
			\end{cases} \] where $g$ be the optimal function for $R_{n-1,\Mr{TB}}.$ We also define $f_2:V\to\Mb{R}$ by
		\[
			f_2(z)=\begin{cases}
				\min\{f_1(\iu^k\psi(z))\mid k=0,1,2,3\} & \text{if }\psi(z)\in S,\\
				0 &\text{otherwise.}
			\end{cases}
	\]
		 Then $f_2(x_1)=1,f_2(x_2)=0$ and $(1/2)\sum_{(x,y)\in E}(f_2(x)-f_2(y))^2\le 12(R_{n-1,\Mr{TB}})^{-1}.$ This concludes the proof.
	\end{proof}
	\begin{defi}[Volume doubling measure]
		 A Borel measure $\nu$ on a metric space $(X,\delta)$ is called a \emph{volume doubling} measure with respect to $\delta,$ if there exists $C>0$ such that $0<\nu(B_\delta(x,2r))\le C\nu(B_\delta(x,r))<\infty$ for any $x\in X$ and $r>0.$

	\end{defi}

	\begin{lem}\label{mu}
		Let $\Mc{V}_n=8^{m_1(n)}5^{n-m_1(n)},$ where $m_1, m_2$ be the same as in Theorem \ref{res}.2. Then
		\begin{enumerate}
			\item $\mu_{(V,E)}(\{y\mid n_{(x,y)\le m}\})\asymp \Mc{V}_m$ for any $x\in V$ and $m\ge0.$
			\item $\mu_{(V,E)}(B_{R_{(V,E)}}(x,R_{(V,E)}(x,y))\asymp \Mc{V}_{n(x,y)}$ for any $x,y\in V.$
			\item $\mu_{(V,E)}$ satisfies the volume doubling condition with respect to $R_{(V,E)}.$
		\end{enumerate}
	\end{lem}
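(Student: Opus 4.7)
The plan is to reduce all three parts to a single cell-counting estimate, using the resistance comparisons already proved. As a preliminary remark, the simple weight gives $\mu^*_z = \deg(z)$, and the maximum degree in $(G^*, E^*)$ is uniformly bounded (by $\mathrm{(p_0)}$ together with the Vicsek/carpet local structure), so $\mu^*(A) \asymp \#A$ for every finite $A \subset G^*$. Hence I argue throughout in terms of cardinalities.

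The key auxiliary fact is the cell estimate $\#(G^* \cap I_{n,a,b}) \asymp V_n$ for every cell whose interior meets $G^*$. Because the inflated copies $3^k(G_k(f_*) + \tfrac{1}{2} + \tfrac{\iu}{2})$ are increasing in $k$ (the bottom-left subcopy $\varphi_1(G_{k+1}(f_*))$ matches a translate of $G_k(f_*)/3$), every such cell is a translated, scaled image of an $n$-th level subcell of some $3^k G_k(f_*)$, and by the recursion this is a scaled copy of $G_n(f_*)$. An easy induction then gives $\#G_n(f_*) \asymp V_n$: a Vicsek step multiplies the count by $5$, a carpet step by $8$, with only a bounded number of boundary identifications at each stage. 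Combined with the definition of $n(x,y)$, the set $\{y : n(x,y)\le m\}$ is the union of at most nine cells $I_{m,a,b}$ adjacent to (or equal to) the cell containing $x$, and contains at least one such cell, which gives part (1).

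For part (2), I combine Proposition \ref{nxy}, which supplies $c R^*_{n(x,z)} \le R^*(x,z) \le C R^*_{n(x,z)}$ uniformly in $x,z$, with Lemma \ref{exp}, which forces $R^*_{n+M} \ge 2 R^*_n$ and hence geometric growth of $R^*_n$. Together they produce the sandwich
\[\{z : n(x,z) \le n(x,y) - K'\} \subseteq B_{R^*}(x, R^*(x,y)) \subseteq \{z : n(x,z) \le n(x,y) + K\}\]
for suitable constants $K, K'$ depending only on $c, C, M$. Applying part (1) to both sides, and noting that $V_{n+j}/V_n$ is bounded above and below for bounded $|j|$, yields (2). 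For (3), given $r>0$ choose $n$ with $R^*_n \le r < R^*_{n+1}$; the same sandwich gives $\mu^*(B_{R^*}(x,r)) \asymp V_n$, and Lemma \ref{exp} yields $R^*_{n+1+M} \ge 2 R^*_{n+1} > 2r$, so the analogous index $n'$ for $2r$ satisfies $n' \le n + M$, whence $\mu^*(B_{R^*}(x, 2r)) \asymp V_{n'} \le 8^M V_n$.

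The main obstacle is the cell estimate of part (1): one must carefully unravel how cells of a fixed scale $3^n$ sit inside the nested inflated copies $3^k(G_k(f_*) + \tfrac{1}{2} + \tfrac{\iu}{2})$ and verify that each such cell genuinely carries one full scaled copy of $G_n(f_*)$, up to boundary identifications of bounded cardinality. Once that is in place, parts (2) and (3) follow essentially formally from Proposition \ref{nxy} and the exponential separation of scales supplied by Lemma \ref{exp}.
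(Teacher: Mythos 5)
The paper gives no proof of this lemma (it states only ``The proof is straightforward and we omit it''), so there is nothing to compare against line by line; your argument is a correct filling-in and is surely the intended one. The three ingredients you use --- the cell count $\#\bigl(G^*\cap I_{n,a,b}\bigr)\asymp V_n$ (via the nesting $3^k(G_k(f_*)+\tfrac12+\tfrac{\iu}{2})=3^{k+1}(\varphi_1(G_k(f_*))+\tfrac12+\tfrac{\iu}{2})$ and bounded boundary identifications), the two-sided comparison $R^*(x,z)\asymp R^*_{n(x,z)}$ from Proposition \ref{nxy}, and the geometric growth $R^*_{n+M}\ge 2R^*_n$ from Lemma \ref{exp} together with the bounded ratio $R^*_{n+1,pt}\lesssim R^*_{n,pt}$ --- are exactly what the surrounding results supply, and they yield (1)--(3) as you describe.
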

	The proof of Lemma \ref{mu} is straightforward and we omit it. Now, we are ready to prove our aim of this section. 	
	\begin{thm}\label{theds}
		$d_s(V,E)=2\log 5/(\log 3+\log 5).$
	\end{thm}
	\begin{proof}
		By Lemma \ref{mu} and \cite[Theorem 4.27]{Sas21}, there exist a distance $\delta$ on $V$ and $\gamma>1$ such that $\delta\qs V,$ $\delta(x,y)^\gamma\asymp \mu_{(V,E)}(B(x,R_{(V,E)}(x,y)))R_{(V,E)}(x,y) $ for any $x,y\in V,$ and $p(2n,x,x)\asymp 1/\mu_{(V,E)} (B_\delta(x,n^{1/\gamma}))$ for any $x\in V$ and $n\ge0.$
		It is easy to see that there exists $C>0$ such that if $\delta(x,y)^\gamma\le C R_{N,\Mr{Pt}}\Mc{V}_N$ then $n(x,y)\le N.$ Moreover, there exists $C'>0$ such that for any $n\ge0$ there exists $N=N(n)$ satisfying $C'R_{N,\Mr{Pt}}<n\le CR_{N,\Mr{Pt}},$ where $C$ is same as above. Therefore
		\begin{align*}
			&\limsup_{k\to\infty}2\frac{-\log p(2k,x,x)}{\log k} =\limsup_{k\to\infty}2\frac{\log \mu_{(V,E)}(B_\delta(x,k^{1/\gamma}))}{\log k} \\
			\le& \limsup_{N\to\infty} 2\frac{\log \mu_{(V,E)} (\{y\mid \delta(x,y)\le (CR_{N,pt}\Mc{V}_N)^{1/\gamma}\})}{\log (R_{N,\Mr{Pt}}\Mc{V}_N)+\log C'} \\
			\le& \limsup_{N\to\infty} 2\frac{\log \mu_{(V,E)} (\{y\mid n(x,y)\le N\})}{\log (R_{N,\Mr{Pt}}\Mc{V}_N)+\log C'} \\
			\le& \limsup_{N\to\infty} 2\frac{\frac{m_1(N)}{N}\log8+(1-\frac{m_1(N)}{N})\log5}{\frac{m_1(N)}{N}(\log8+\log\rho)+(1-\frac{m_1(N)}{N})(\log5+\log3)+\frac{m_2(N)}{N}\log C_1}\\
			=& 2\frac{\log 5}{\log3+\log5}
		\end{align*}
		because $\lim_{N\to\infty}(m_1(N)/N)=\lim_{N\to\infty}(m_2(N)/N)=0.$ Similarly to the above, we obtain $\liminf_{k\to\infty}-2\log p_{2k}(x,x)/\log k\ge 2\log 5/(\log3+\log5),$ which proves the theorem.
	\end{proof}

\section{Partition satisfying basic framework}\label{secbf}%
In this section, we first introduce the notion of a BF-partition, which was introduced by \cite{Kig2020} for the case of compact metric spaces and extended to the case of infinite graphs in \cite{Sas21}. Then, we also introduce results in \cite{Kig2020, Sas21}, which are necessary to evaluate the Ahlfors regular conformal dimension. See these papers for details.\par
Since the definition of a BF-partition is too complicated to directly understand, we begin by describing the idea of this notion in the case of $(V,E).$ For the convenience of the reader who jumps here from the introduction, we recall that $(V,E)$ is defined by \eqref{defve}. Let $T_*=\{S_{n,a,b}\mid n\ge0, \Mr{int}(S_{n,a,b})\cap V\ne\emptyset\},$ then the essential part of a partition $K$ is defined as $K: T_*\to 2^{V}$ by $K(S_{n,a,b})=V\cap S_{n,a,b}.$ We remark that for any $S_{n,a,b}\in T_*,$ there exists a unique  $S_{n+1,a_*,b_*}\in T_*$ with $K(S_{n,a,b})\subset K(S_{n+1,a_*,b_*}).$ We will consider a graph on $\{S_{n,a,b}\in T_*\}$ for each $n$ such that there exists an edge between $S_{n,a_1,b_1}$ and $S_{n,a_2,b_2}$ if $S_{n,a_1,b_1}\ne S_{n,a_2,b_2}$ and $K(S_{n,a_1,b_1})\cap K(S_{n,a_2,b_2})\ne\emptyset.$\par
Here we return to introduce notions for the definition of a BF-partition. 

\begin{defi}[Tree with a reference point]
	Let $T$ be a countable set and $\pi:T\to T$ be a map such that the following conditions hold.
	\begin{align*}
		 \bullet\ & \pi^n(w)\ne w \text{ for any }n\ge 1\text{ and } w\in T. \\
		 \bullet\ & \text{For any }w,v\in T,\text{ there exist }n,m\ge0\text{ such that }\pi^n(w)=\pi^m(v). 
	\end{align*}
	Fix any $\phi\in T,$ and we call the triplet $(T,\pi,\phi)$ a \emph{bi-infinite tree with a reference point}. 
\end{defi}
As the name shows, $(T,\pi,\phi)$ has a corresponding tree structure on $T$ defined as follows.
\begin{lem}[{\cite[Lemma 3.2]{Sas23}}]\hspace{1pt}\\ \vspace{-1\baselineskip} \begin{enumerate}
		\item Let $b(w,v)=\min\{n\ge0| \pi^n(w)=\pi^m(v)\text{ for some }m\ge0\}$ for $w,v\in T,$ then $\pi^{b(w,v)}(w)=\pi^{b(v,w)}(v).$
		\item Let $\Mc{A}=\{(w,v)\mid \pi(w)=v \text{ or }\pi(v)=w\}$ then $(T,\Mc{A})$ is a tree.
	\end{enumerate}
\end{lem}
For the rest of this section, we assume $(T,\pi,\phi)$ to be a bi-infinite tree with a reference point. We define $[w]=b(w,\phi)-b(\phi,w), \ T_n=\{w\in T\mid [w]=n\}$ and $T^w=\cup_{k\ge0}\pi^{-k}(w)$ for any $w\in T$ and $n\in\Mb{Z}.$
\begin{defi}[Partition]
	Let $(V',E')$ be an infinite graph. A map $K:T\to\{A\subseteq V'\mid \# (A)<\infty\}$ is called a \emph{partition} of $(V',E')$ parametrized by $(T,\pi,\phi)$ if the following conditions hold.
	\begin{align}
		&\Cup_{v\in\pi^{-1}(w)}K(v)=K(w)\text{ for any }w\in T. \tag{$*$}\\
		&\text{If }(w_k)_{k\in\Mb{Z}}\subset T\text{ satisfies }w_k\in T_k\text { and }\pi(w_{k+1})=w_k \text{ for any }k,\text{ then there exist }n_0\tag{$*$}\\
		&\text{and }(x,y)\in E'\text{ such that }K(w_n)=\{x,y\}\text{ for any }n\ge n_0. \notag \\
		&\text{For any }(x,y)\in E',\text{ there exists } w\in T\text{ with }K(w)=\{x,y\}.\label{B01}
	\end{align}
\end{defi}
	Hereafter, we write $K_w$ instead of $K(w)$ for simplicity. Let 
	\begin{align*}
		\Lambda_e&\coloneqq \{w\in T\mid \#(K_{w})=2\text{ and } \#(K_{\pi(w)})>2\}\\
		T_e&\coloneqq\{w\in T\mid T^w\cap\Lambda_e \ne\emptyset \}=\{w\in T\mid \#(K_{\pi(w)})>2\}.
	\end{align*}
\begin{defi}[Basic framework]\label{bf}
	Suppose $\sup_{w\in T_e\setminus \Lambda_e}\#(\pi^{-1}(w))<\infty$, $K$ be a partition of a graph $(V',E')$ parametrized by $(T,\pi,\phi)$ and $\delta$ be a metric on $V'.$ Let
	\[E'_n=\{(w,v)\in (T_n\cap T_e)\times(T_n\cap T_e)\mid K_w\cap K_v\ne\emptyset\text{ and } w\ne v\}\]
	and let $d'_n$ denote the graph distance of $(T_n,E'_n)$ allowing $d'_n(w,v)=\infty.$ We say $K$ satisfies the \emph{basic framework} with respect to $\delta$ if the following conditions hold.
	\begin{align}
		& K_w\ne K_v\text{ for any }w,v\in \Lambda_e \text{ with }w\ne v. \label{B1}\\
		& \text{There exists }\zeta\in(0,1)\text{ such that } \diam_\delta(K_w)\asymp \zeta^{[w]}\text{ for any }w\in T_e.\label{B2}\\
		& \text{There exists }\xi>0\text{ such that  for each }w\in T_e,\text{ there exists }x_w\in K_w\text{ such that if}\label{B3}\\ 
		&(x,y)\in E'\text{ and }\{x,y\}\in B_\delta(x_w,\xi\zeta^{[w]}) \text{ then }\{x,y\}=K_v\text{ for some }v\in T^w.\notag\\
		&\text{Let }\Delta_{m}(x,y)\!=\!\sup\{n\!\mid\! x\in K_w, y\in K_v \text{ and }d'_n(w,v)\le m\text{ for some }w,v\in T_n\}\label{B4}\\
		&\text{then there exists }M_*\in\Mb{N}\text{ with }\delta(x,y)\asymp \zeta^{\Delta_{M_*}(x,y)}\text{ for any }x,y\in V'. \notag\\
		&\textstyle L_*:=\sup_{w\in T_e}\#(\{v\mid (w,v)\in E'_{[w]} \})<\infty \label{B5}
	\end{align}
\end{defi}
\begin{rem}
	To simplify notation, the formulation of the basic framework differs from the one in \cite{Sas21}. We can check the equivalence between these formulations, but skip here.
\end{rem}
To the end of this section, we assume $K$ to be a BF-partition with respect to $\delta$. To use results in \cite{Sas21}, we have to modify $T$ and $K.$ Roughly speaking, we consider the modified version of $K$ as a partition (of a $\sigma$-compact metric space) of the corresponding cable system to $(V',E').$ Suppose $K$ to be a partition of $(V',E')$ satisfying the basic framework parametrized by $(T,\pi,\phi)$. Let 
\[\tau_\zeta\coloneqq\{(x,y,k,m)\mid (x,y)\in E',\ k\ge1\text{ and }1\le m\le 2^{n(k)}\}/{\sim}\]
where $n(k)\ge 0$ with $2^{-n(k)}\le \zeta^k<2^{1-n(k)},$ and $(x_1,y_1,k_1,m_1)\sim(x_2,y_2,k_2,m_2)$ if $x_1=y_2,\ x_2=y_1,\ k_1=k_2$ and $m_1+m_2=2^{k_1}+1.$ Then we define $T_\zeta=T_e\scup\tau_\zeta,$ and $\pi':T_\zeta\to T_\zeta$ by
\[
	\pi'(w)=\begin{cases}
			\pi(w) & \text{if }w\in T_e,\\
			v & \text{if }w=(x,y,1,m)\in \tau_\zeta\text{ and }K_v=\{x,y\},\\
			(x,y,k-1,j) & \begin{aligned} &\text{if }w=(x,y,k,m)\in \tau_\zeta\text{ with }k\ge1 \text{ and}\\
			&j-1<m/(2^{n(k)-n(k-1)})\le j.
			\end{aligned}
		\end{cases}
\]
(Note that $\pi'$ is well-defined by \eqref{B01} and \eqref{B1}.) Then it is easy to show that we can choose $\phi'\in T_\zeta$ with $[\ \cdot\ ]'|_{T_e}\equiv[\ \cdot\ ]|_{T_e}$ where $[\ \cdot\ ]'$ is the height defined with $(T_\zeta,\pi',\phi').$ Thus we consider $[\ \cdot\ ]'$ as an extension of $[\ \cdot\ ]$ and simply write $[\ \cdot\ ]$ instead of $[\ \cdot\ ]'.$ We formally define $K^*: T_\zeta\to\{A\subset V'\}$ by
	\[K^*_w=\begin{cases}
			K_w &\text{if }w\in T_e,\\
			\{x\} &\text{if }w=(x,y,k,1)(=(y,x,k,2^{n(k)}))\in \tau_\zeta\text{ for some }x,y\in V'\text{ and }k\ge1,\\
			\emptyset &\text{otherwise.}
		\end{cases}  \]
Moreover, we also define $(T_\zeta)_n=\{w\in T_\zeta\mid[w]=n\}$ and $E^*_n\subset (T_\zeta)_n\times(T_\zeta)_n$ by
\[E^*_n=\biggl\{(v,w)\biggm\vert \begin{minipage}{280pt}
	$K^*_w\cap K^*_v\ne\emptyset,$ or $v=(x,y,k,m_1),\ w=(x,y,k,m_2)$ for some $x,y\in V',\ k,m_1,m_2\ge1$ with $|m_1-m_2|=1$
\end{minipage} \biggr\}.\]
 \begin{defi}[$p$-spectral dimensions]\label{Ce}
	Let
	\begin{align*}
		N_*&=\limsup_{k\to\infty}\Bigl(\sup_{w\in T_\zeta}\#(\pi^{-k}(w))\Bigr)^{1/k},\\
		\Mc{E}^p_n(f)&=\frac{1}{2}\sum_{(v,w)\in E^*_n}|f(v)-f(w)|^p,\\
		\Mc{C}^M_{w,k}&=\{v\in T_{[w]+k}\mid  d^*_{[w]}(w,\pi^k(v))>N\}\text { and}\\
		\Mc{E}^M_{p,k,w}&=\inf\{\Mc{E}^p_{[w]+k}(f)\mid f:(T_\zeta)_{[w]+k}\to\Mb{R},\ f|_{\pi^{-k}(w)}\equiv1,\ f|_{\Mc{C}^M_{w,k}}\equiv 0\}
	\end{align*}
	for any $n,$ $f:(T_\zeta)_n\to\Mb{R}$, $w\in T$ and $p>0,$ where $d^*_n$ is the graph distance of $((T_\zeta)_n,E^*_n)$ allowing $d^*_n(w,v)=\infty.$ We define the \emph{upper $p$-spectral dimensions} (parametrized by $M$) of the partition $K$ for $p>0$ by
	\begin{equation}\label{uds}
		\ol{d}^s_p(K,M)=p\biggl(1-\frac{\limsup_{k\to\infty}\frac{1}{k}\bigl(\sup_{w\in T_\zeta}\log\Mc{E}^M_{p,k,w}\bigr) }{\log N_*}\biggr)^{-1}
	\end{equation}
	and the \emph{lower $p$-spectral dimensions} $\ul{d}^s_p(K,M)$ for $p>0$ by \eqref{uds} but replacing $\limsup$ by $\liminf.$
\end{defi}
Now we are ready to state Theorem \ref{Smain-} precisely.
\begin{thm}[\cite{Sas21}, Theorem 4.14]\label{Smain}
	If $M\ge M_*$ is sufficiently large (where $M_*$ is the integer in \eqref{B4}), then the following holds.
	\begin{enumerate}
			\item $\inf\{p\mid\liminf_{k\to\infty}(\sup_{w\in T_\zeta}\Mc{E}^M_{p,k,w})=0 \} =\inf\{p\mid\limsup_{k\to\infty}(\sup_{w\in T_\zeta}\Mc{E}^M_{p,k,w})=0\} =\ard(V',\delta).$
			\item If $p>\ard(V',\delta)$ then $p>\ol{d}^s_p(K,M)\ge\ul{d}^s_p(K,M)\ge \ard(V',\delta).$
			\item If $p\le\ard(V',\delta)$ then $p\le\ul{d}^s_p(K,M)\le\ol{d}^s_p(K,M)\le\ard(V',\delta).$
		\end{enumerate}
\end{thm}
\begin{rem}
	This theorem is the discrete version of \cite[Theorem 4.7.9]{Kig2020}. The assumption in Theorem \ref{Smain}.2 and 3 are slightly different from that in \cite{Sas21}, Theorem 4.14] (and also \cite[Theorem 4.7.9]{Kig2020}), but these are justified by \cite[Theorem 4.7.6]{Kig2020}.
\end{rem}

\section{Evaluation of Ahlfors regular conformal dimension}\label{secARC}
In this section, we prove $\ard(V,d)=\ard(\mathrm{SC},|\cdot|_{\Mb{C}})$ and Theorem \ref{theex} using a BF-partition. Recall that $T_*=\{S_{n,a,b}\mid n\ge0, \Mr{int}S_{n,a,b}\cap V\ne\emptyset\}$ and let \[T=T_*\Cup\
\bigl\{(n,S_{0,a,b}/2) \bigm\vert 
			n\ge1,\ a,b\in\Mb{Z}\text{ with }S_{0,a,b}/2\subset S_{0,a_*,b_*}\text{ for some } S_{0,a_*,b_*}\in T_*
	\bigr\}.
\] We define $\pi: T\to T$ by
\[\pi(w)=\begin{cases}
	S_{n+1,a_*,b_*}&\text{if }w=S_{n,a,b}\text{ and }S_{n,a,b}\subset S_{n+1,a_*,b_*},\\
	S_{0,a_*,b_*}&\text{if }w=(1,S_{0,a,b}/2)\text{ and }S_{0,a,b}/2\subset S_{0,a_*,b_*},\\
	(n-1,S_{0,a,b}/2)&\text{if }w=(n,S_{0,a,b}/2)\text{ for }n\ge2.
\end{cases}
\] 
  Then $(T,\pi,S_{0,0,0})$ is a bi-infinite tree with a reference point. Let $K(S_{n,a,b})\coloneqq S_{n,a,b}\cap V'$ for $S_{n,a,b}\in T_*$ and otherwise $K(n,S_{0,a,b}/2)\coloneqq (S_{0,a,b}/2)\cap V',$ then we can see that $K$ is a partition of $V'$ satisfying the basic framework with respect to $|\cdot|_{\Mb{C}}$ as implied in the beginning of Section \ref{secbf}. We also note that $T_{-n}=\{S_{n,a,b}\in T\}$ for $n\ge0.$\par
  \begin{prop}\label{theard}
  	$\ard(V,|\cdot|_{\Mb{C}})=\ard(\mathrm{SC},|\cdot|_\Mb{C}).$
  \end{prop}
\begin{proof}
Let
\begin{align*}
X_k&=\{z\in S\mid \mim(z)=\pm \mre(z),\ \{\mre(z),\mim(z)\}\subset 2^{-1}3^{-k}\Mb{Z}\},\\
V^a_k&=\begin{cases}
	\Phi_{F(a)}\circ\cdots\circ\Phi_{F(a-(k-1))}(V_0)& \text{if } k\le a,\\
	\Phi_{F(a)}\circ\cdots\circ\Phi_{F(1)}(X_{k-a})& \text{if } k>a,
\end{cases}\\
E^a_k&=\{(x,y)\in V^a_k\times V^a_k\mid |x-y|=3^{-n}/\sqrt{2}\}
\end{align*}
for $a,k\ge0.$ For $p>0$ and any graph $(V',E')$ with $V'\subset \Mb{C}$,  we also let
\[\Mc{E}_{p,\Mr{TB}}(V',E')=\inf\biggl\{\frac{1}{2}\sum_{(x,y)\in E'} |f(x)-f(y)|^p \biggm\vert f:V'\to\Mb{R}, f|_{V'\cap S_T}\equiv 1, f|_{V'\cap S_B}\equiv 0 \biggr\}.\] Then Theorem \ref{Smain} and the same symmetry of $(V^a_k,E^a_k)$ as the unit square show that $$\ard(V,|\cdot|_{\Mb{C}})= \inf\{p\vert \limsup_{k\to\infty}\sup_{a\ge0}\Mc{E}_{p,\Mr{TB}}(V^k_a,E^k_a)=0\}.$$
Since $\ard(\mathrm{SC},|\cdot|_{\Mb{C}})=\inf\{p\mid \limsup_{k\to\infty}\Mc{E}_{p,\Mr{TB}}(V^{\Mr{SC}}_k,E^{\Mr{SC}}_k)=0\}$ by \cite[Example 4.6.7 and Theorem 4.6.10]{Kig2020} and symmetry of the Sierpi\'nski carpet, it is easy to see that $\ard(V,|\cdot|_{\Mb{C}})\ge\ard(\mathrm{SC},|\cdot|_{\Mb{C}})\ge1.$ 
To prove $\ard(V,|\cdot|_{\Mb{C}})\le\ard(\mathrm{SC}),$ we fix any $p>\ard(\mathrm{SC},|\cdot|_{\Mb{C}}).$ Then there exists $C>0$ such that if $a,b,k\ge0$ with $a\wedge k>b$ satisfy $F(a)=\cdots=F(a-(b-1))=1$ then $\Mc{E}_{p,\Mr{TB}}(V^k_a,E^k_a)\le C \Mc{E}_{p,\Mr{TB}}(V^{k-b}_{a-b},E^{k-b}_{a-b})\Mc{E}_{p,\Mr{TB}}(V^{\Mr{SC}}_b,E^\Mr{SC}_b).$ Indeed, this claim follows from in the same way as the potential theoretic argument in \cite[Theorem 5.8]{McG} (which is based on\cite[Theorem 4.3]{BB90}) for resistances, because only symmetry of the square, convexity of $f(x)=x^2$ and the fact that the optimal functions are constant on $S_T$ and $S_B$ are used in the argument. Similarly, we may assume that  if $a,b,k\ge0$ with $a\wedge k>b$ satisfy $F(a)=\cdots=F(a-(b-1))=0$ then $\Mc{E}_{p,\Mr{TB}}(V^k_a,E^k_a)\le C \Mc{E}_{p,\Mr{TB}}(V^{k-b}_{a-b},E^{k-b}_{a-b})3^{(1-p)b}$ without loss of generality.
By \cite[Proposition 4.7.5]{Kig2020}, there exists $\gamma<1$ such that $\Mc{E}_{p,\Mr{TB}}(V^{\Mr{SC}}_k,E^\Mr{SC}_k)\lesssim\gamma^k$ for any $k\ge0,$ so there exist $C'>0$ such that
\[\Mc{E}_{p,\Mr{TB}}(V^a_k,E^a_k)\le C'\gamma^{m_1^a(k)}3^{(1-p)(k-m_1^a(k))}C^{m_2^a(k)}\le C'(3^{1-p}\vee \gamma)^k(\sup_{a\ge0} C^{m_2^a(k)})\]
for any $a,k\ge0$, where
\begin{align*}
	m^a_1(k)&\coloneqq\#\{b\mid (a-k)\vee0<b\le a, F(b)=1\},\\
	m^a_2(k)&\coloneqq\#\{b\mid (a-k)\vee0<b\le a, F(b)\ne F(b-1)\}.
\end{align*}
By definition of $F,$ $\lim_{k\to\infty}\sup_a m_2^a(k)/k=0$ so $\lim_{k\to\infty}\sup_a\Mc{E}_{p,\Mr{TB}}(V^a_k,E^a_k)=0.$ Since $p>\ard(\mathrm{SC},|\cdot|_\Mb{C})$ is arbitrary, this means $\ard(G^*,|\cdot|_{\Mb{C}})\le\ard(\mathrm{SC},d_2),$ which concludes the proof.
\end{proof}
\begin{lem}\label{dn}
There exist $C>0$ and $c\in\Mb{N}$ such that for any $x,y,z\in V,$
\begin{enumerate}
\item $d(x,y)\asymp d_{n(x,y)}(p_1,p_5)$ where $d_n$ is the graph distance of $(V_n,E_n).$
\item If $n(x,y)=n(x,z)+1$ then $d(x,y)\le Cd(x,z).$
\item If $n(x,y)\ge n(x,z)+c$ then $d(x,y)\ge 2d(x,z).$
\end{enumerate}
\end{lem}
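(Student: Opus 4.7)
I prove (1) first; (2), (3), (4) then follow from an explicit recursion for the pair $b_n := d_n(p_1, p_5)$ and $a_n := d_n(p_1, p_3)$. Shortest-path analysis through the subdivisions of Figures~\ref{Gn0}--\ref{Gn1}, using the shared corner identities $\varphi_j(p_k) = \varphi_l(p_m)$ between adjacent copies, yields: if $f_*(n) = 0$, then $b_n = 3 b_{n-1}$ and $a_n = a_{n-1} + 2 b_{n-1}$; if $f_*(n) = 1$, then $a_n = 3 a_{n-1}$ and $b_n = b_{n-1} + 4 a_{n-1}$. The ratio $c_n := a_n/b_n$ stays in $[1/2, 1]$ under both rules (checked by induction from $c_0 = 1$), so $a_n \asymp b_n$ and $b_n \ge 3 b_{n-1}$ throughout.

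For part (1) the upper bound mirrors Proposition~\ref{nxy}: given $x, y$ with $n(x, y) = n$, I inductively choose corner chains $(x_{j,k})_{k=0}^n$ (for $j = 1, 2$) in the cells containing $x_j$ at each level $k$, with $\{x_{j,k-1}, x_{j,k}\}$ lying inside a common scaled copy of $G_{k-1}(f_*)$; each hop costs $\lesssim b_{k-1}$, and the geometric growth $b_k \ge 3 b_{k-1}$ makes the sum telescope to $\lesssim b_n$. The lower bound uses the minimality of $n(x, y)$, which places $x$ and $y$ in disjoint level-$(n-1)$ cells, forcing any $x$--$y$ path in $G^*$ to realize a full side-to-side crossing of a level-$n$ cell; rescaling the recursion, this crossing has length $\gtrsim b_{n-1} \asymp b_n$.

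Parts (2) and (3) are immediate from (1): $b_{n+1}/b_n \le 5$ yields (2) with $C$ absorbing the $\asymp$-constants, and iterating $b_k \ge 3 b_{k-1}$ gives $b_{n+c}/b_n \ge 3^c$, proving (3) for $c$ large. For (4), iterating the recursion shows $b_n \asymp 3^n \cdot 2^{m_2(n)}$, where $m_2(n) := \#\{k \le n : f_*(k) = 1,\ f_*(k-1) = 0\}$: during the long Vicsek stretches of $f_*$ the dynamic $c \mapsto 2/3 + c/3$ drives $c_{k-1}$ exponentially close to $1$, so each SC run of length $\ell$ multiplies $b_n/3^n$ by $2 - 3^{-\ell} \in [5/3, 2]$, and iterated application yields the claimed $2^{m_2(n)}$. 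Since $m_2(n) \to \infty$ by the definition of $f_*$, $\diam(G_n, d_n) \ge b_n$ with $b_n/3^n \to \infty$, proving (4).

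The principal obstacle is the lower bound in part (1). Unlike the resistance case of Proposition~\ref{nxy}, which admits a symmetric potential argument, the graph-distance version demands a concrete vertex cut at scale $n(x, y)$ all of whose vertices lie at $d_*$-distance $\gtrsim b_{n(x, y) - 1}$ from $x$---a combinatorial task without direct quadratic duality.
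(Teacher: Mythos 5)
Your overall architecture matches the paper's: reduce everything to level-$n$ crossing quantities, show they are mutually comparable and grow geometrically, then read off (1)--(4). The upper-bound halves (explicit paths giving $b_n\le 3b_{n-1}$, $b_n\le b_{n-1}+4a_{n-1}\le 5b_{n-1}$, and the telescoping chain for $d_*(x,y)\lesssim b_{n(x,y)}$) are fine and are essentially what the paper does. But there is a genuine gap on the lower-bound side, and it is not confined to the one place you flag. You assert the recursions as \emph{equalities}, yet only the $\le$ direction is justified by your path constructions. The $\ge$ direction requires showing that a shortest path cannot shortcut, and this fails as stated: in the Sierpi\'nski-carpet arrangement ($f_*(n)=1$) two adjacent copies of $G_{n-1}$ share an entire edge's worth of vertices, not just the two corner points, and since $E_n$ is defined by the metric condition $|x-y|=3^{-n}2^{-1/2}$ there can even be edges joining the two copies away from any shared corner. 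A path from $p_1$ to $p_3$ may therefore cross from copy $1$ to copy $2$ at an interior boundary point, and nothing in your recursion controls the cost of such a crossing. Since (3) and (4) rest entirely on the lower-bound direction of the recursion ($b_n\ge 3b_{n-1}$ and $b_n/3^n\to\infty$), they inherit the gap; and the lower bound in (1), which you explicitly acknowledge as an ``obstacle,'' additionally needs that an \emph{arbitrary} side-to-side crossing of a level-$n$ cell costs at least a constant times the corner-to-corner distance --- a statement your corner-to-corner recursion does not address at all.

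The paper closes exactly this gap by working with four quantities rather than two --- the point-to-opposite-side distance $d_n(p_1, I_r\cap G_n)$, the side-to-side distance $d_n(I_l\cap G_n, I_r\cap G_n)$, and the two corner-to-corner distances $d_n(p_1,p_5)$, $d_n(p_1,p_3)$ --- and proving they are all comparable via reflection (folding) arguments: reflecting across $\{\mre(z)+\mim(z)=0\}$ and $\{\mre(z)=0\}$ gives $2a_n\ge c_n\vee e_n$, and folding a side-to-side crossing into point-to-side crossings of sub-cells gives $b_n\ge 3^{n-l+1}a_{l-1}\ge\frac{1}{2}a_n$ (with $l=l(n)$ the last Vicsek level). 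The same folding at an SC-to-Vicsek transition yields $b_n\ge 10a_{n-1}$, which is what actually drives part (4) in place of your exact-recursion computation. To repair your argument you would either need to reproduce such a reflection argument, or prove directly that every boundary crossing between adjacent copies passes within bounded graph distance of a shared corner and that the point-to-side distance is comparable to the corner-to-corner one; neither is supplied, so as written the proof of the lower bounds in (1), (3) and (4) is incomplete.
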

\begin{proof}
Let $a_n=d_n(p_1,S_B\cap V_n),$ $b_n=d_n(S_T\cap V_n, S_B\cap V_n),$ $c_n=d_n(p_1,p_5),$ and $e_n=d_n(p_1,p_7)$ for any $n\ge0.$ It is obvious that $c_n\wedge e_n\ge a_n\ge b_n.$ Moreover, considering the reflection on $\{z\mid \mre(z)+\mim(z)=0\}$ and $\{z\mid \mim(z)=0\},$ we can obtain $2a_n\ge c_n\vee e_n.$ Additionally, let $l=l(n)$ be same as in Lemma \ref{cur}, then by the reflection of edges, we can see that
\[b_n\ge 3^{n-l+1}a_{l-1}\ge\frac{1}{2}((3^{n-l+1}-2)e_{l-1}+2c_{l-1})\ge\frac{1}{2}a_n,\]
(see  Figure \ref{figbnal}), which shows $a_n\asymp b_n\asymp c_n \asymp e_n$ for any $n\ge0.$\par
Since $b_{n+1}\ge 3b_n,$ we also obtain $d(x,y)\lesssim c_{n(x,y)}$ for any $x,y\in V$ in the same way as Proposition \ref{nxy}. On the other hand, by definition and reflection on edges similar to the above, $d(x,y)\ge b_{n(x,y)-1}$ holds. This and $c_{n+1}\le 5c_n$ shows 1. 2 and 3 immediately follow from 1 and $b_{n+1}\ge 3b_n,$ $c_{n+1}\le 5c_n.$
\end{proof}
\begin{rem}
	$\Mr{diam}(V_n,d_n)\not\asymp 3^n$ for $n\ge0.$ Indeed, $b_{n+1}\ge b_n$ for any $n$ and if $F(n+2)=F(n+1)=0$ and $F(n)=1$ then $b_{n+2}\ge30b_{n-1}$ because $b_{n+2}\ge3a_{n+1}$ and $a_{n+1}\ge10b_{n+2}$ (see Figure \ref{fig10an}).
\end{rem}
\begin{figure}[tb]
	\centering
	\begin{minipage}[b]{0.45\linewidth}
	\centering
		\begin{tikzpicture}[scale=0.45]
		\foreach \b /\B in {0/0, 3/0, 6/0, 9/0, 0/3, 6/3, 9/3}{
			\foreach \a / \A in {0/0, 2/0, 2/2, 0/2, 1/1}{
				\draw (\a+\b,\A+\B) -- (\a+\b+1,\A+\B) --(\a+\b+1,\A+\B+1) --(\a+\b,\A+\B+1) -- cycle;
		}}
		\draw[dashed] (12.2,3) -- (13,3);
		\draw (0.5,0.3) arc (180:270:1); 
		\coordinate[label=right :$3^n(G_{l-1}+\frac{1}{2}+\frac{i}{2})$] (0) at (1.5,-0.9);
		\draw(0,3.5)--(0.2,3.3)--(0.6,3.8)--(0.8,3.7)--(1.4,4.6)--(1.6,4.8)--(2.2,3.6)--(2.4,3.8)--(2.8,2.2)--(3.2,2.6)--(3.6,2.4)--(4.4,1.6)--(4.8,1.8)--(5.2,0.2)--(5.8,0.8)--(6.4,0.4)--(8.4,2.4)--(9.4,2.2)--(9.6,2.4)--(10.2,4.8)--(10.4,4.2)--(10.6,4.4)--(11.4,3.6)--(11.6,2.4)--(12,2.6);
		\draw[dotted,thick](2.6,3)--(2.8,3.8)--(3,3.6);
		\draw[dotted,thick](9.75,3)--(10,2);
		\draw[dotted,thick](11.5,3)--(11.6,3.6)--(12,3.4);
		\foreach \c /\C in {1/4, 2/4, 4/2, 5/1, 7/1, 8/2, 10/4, 11/4}{
			\filldraw (\c, \C) circle [x radius=0.12, y radius=0.12];}
		\draw (10, 2) circle [x radius=0.12, y radius=0.12];
	\end{tikzpicture}\vskip-5pt
	\caption{Decomposition of edge-to-edge path to point-to-edge paths}
	\label{figbnal}
	\end{minipage}
\quad
\begin{minipage}[b]{0.45\linewidth}
	\centering
	 \begin{tikzpicture}[scale=0.5]
		\foreach \b /\B in {0/0, 3/3, 6/0}{
			\foreach \a / \A in {0/0, 1/0, 2/0, 2/1, 2/2, 1/2, 0/2, 0/1}{
				\draw (\a+\b,\A+\B) -- (\a+\b+1,\A+\B) --(\a+\b+1,\A+\B+1) --(\a+\b,\A+\B+1) -- cycle;
			}
			\filldraw (1+\b,1+\B)--(2+\b,1+\B)--(2+\b,2+\B)--(1+\b,2+\B)--cycle;}
		\draw (0.5,0.5) node {1};
		\draw (1.5,0.5) node {2};
		\draw (2.5,1.5) node {3};
		\draw (2.5,2.5) node {4};
		\draw (3.5,3.5) node {5};
		\draw (4.5,3.5) node {6};
		\draw (5.5,3.5) node {7};
		\draw (6.5,2.5) node {8};
		\draw (7.5,2.5) node {9};
		\draw (8.5,2.5) node {10};
		\filldraw (0, 0) circle [x radius=0.2, y radius=0.2];
	\end{tikzpicture}
\caption{Any point-to-edge path must pass 10 blocks.}
\label{fig10an}
\end{minipage}
\end{figure}
\begin{proof}[Proof of Theorem \ref{theex}]
By Lemma \ref{dn} and $|x-y|_\Mb{C}\asymp 3^{n(x,y)},$ it is easy to see that $d\qs|\cdot|_\Mb{C}.$ Therefore $\ard(V,d)=\ard(V,|\cdot|_{\Mb{C}})$ by definition. Since it is known that $\ard(\mathrm{SC}, |\cdot|_{\Mb{C}})\ge 1+(\log 3/ \log 2)$ (see \cite{Ty2000} and \cite{TW2006} for example) and $1+(\log 3/ \log 2)>1.5>2\log 5/(\log 3+\log 5)$, we obtain the desired inequality by Theorem \ref{theds} and Proposition \ref{theard}.
\end{proof}
\begin{rem}
By Theorem \ref{res} and Proposition \ref{nxy}, we can also see $R_{(V,E)}\qs|\cdot|_\Mb{C}$ and so $\ard (V,R_{(V,E)})=\ard(V,|\cdot|_\Mb{C})=\ard(V,d).$
\end{rem}

\section{Cases symmetric for scaling}\label{secSym}

In this section we first prove the following lemma:
\begin{lem}\label{dcube}
	Under the same assumption as Theorem \ref{theth}, there exists a BF-partition $K$ with respect to $d'.$
\end{lem}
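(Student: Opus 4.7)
The plan is to construct a partition $K$ of $(G,E)$ by nested maximal separated sets and then to verify the basic framework conditions using the Ahlfors regularity. Fix an integer $L \ge 2$. The volume bound $\mu(B_d(x,n)) \asymp n^\beta$ gives volume doubling, and in particular metric doubling, for $(G,d)$. For each $n \ge 0$, greedily choose a maximal $L^n$-separated subset $V_n \subset G$ so that $V_{n+1} \subset V_n$, and assign to each $x \in V_n$ a Voronoi-type cell $K^{(n)}_x$ of diameter $\asymp L^n$, arranged to nest cleanly across scales (so that each $K^{(n+1)}_y$ is a union of certain $K^{(n)}_x$). These cells become the tree vertices at height $-n$. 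On the fine side (tree height $\ge 1$), the partition formally stabilizes: each edge $e \in E$ gives a chain of tree vertices with $K_w = e$, providing the bi-infinite structure required by the framework.

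Conditions (PG1)--(PG3) then follow immediately from nesting together with the explicit edge-level assignment. Turning to the basic framework, the scaling $d(w) \asymp r^{[w]}$ with $r = L^{-1}$ is built into the construction by maximality of the $L^n$-separated sets. The uniform bound $\sup_{w} \#S(w) < \infty$ and uniform finiteness both reduce to metric doubling, since the number of children of, or of same-scale neighbors of, a given cell is bounded by the number of $L^{n-1}$-separated (respectively $L^n$-separated) points in a ball of radius $O(L^n)$. The adapted condition $B_d(x,\alpha_1 r) \subseteq U^d_M(x,r) \subseteq B_d(x,\alpha_2 r)$ then follows because cells at scale $L^n$ have diameter $\asymp L^n$ and cover $G$ with bounded multiplicity, so unions of $M$-neighboring cells sandwich a ball between two concentric balls of comparable radii.

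The most delicate condition is thickness: for each $w \in T_e$ one must exhibit some $x \in K_w$ whose scale-$\alpha g(\pi(w))$ neighborhood in $(\Lambda^g_{\alpha g(\pi(w))}, E^g_{\alpha g(\pi(w))})$ lies in $T_w$. I plan to enforce this by recording, in the inductive construction, a distinguished \emph{deep} point $y_x \in K^{(n)}_x$ lying at $d$-distance at least $c_0 L^n$ from $G \setminus K^{(n)}_x$. The existence of such $y_x$ follows from the Ahlfors regularity: were $K^{(n)}_x$ equal to its $c_0 L^n$-boundary shell, the volume bounds on annuli would contradict the lower bound $\mu(K^{(n)}_x) \gtrsim L^{n\beta}$. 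I expect the main technical point to be propagating this deep-point condition consistently through the inductive refinement—so that the deep point at one scale remains deep within every ancestor cell—which is likely to require replacing the plain Voronoi assignment by a Christ-type decomposition adapted to the graph setting; once this is in place, thickness follows, concluding the verification of the basic framework.
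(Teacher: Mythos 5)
Your overall strategy---a Christ/Hyt\"onen--Kairema-type dyadic decomposition driven by metric doubling, with an inner-ball property supplying thickness---is the right one, but there is a concrete gap in how you meet (PG1)--(PG3). You decompose the \emph{vertex} set into Voronoi-type cells and then declare that at tree heights $\ge 1$ the partition ``formally stabilizes'' with $K_w=e$ for each edge $e$. These two choices are incompatible: (PG1) forces every child cell to be a subset of its parent, while (PG2)--(PG3) force the fine-scale cells to be two-point sets $\{x,y\}$ with $(x,y)\in E$. Hence every coarse cell must be a union of whole edges; but a Voronoi assignment of vertices at scale $L^n$ (indeed any vertex-based decomposition) will cut edges, i.e.\ there will be edges $(x,y)$ with $x$ and $y$ in different cells, and such an edge can never appear as a cell $K_w$ at any finer height without violating (PG1). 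The paper avoids this by running the dyadic decomposition on the edge set $E/\!\sim$ equipped with the Hausdorff distance $d_H$ (still doubling, since $d$ is doubling and the graph has bounded degree under $\Mr{(p_0)}$), applying \cite[Theorem 2.2]{HK2012} there, and setting $K_w$ to be the set of endpoints of the edges in the dyadic cube $Q_w$; every cell is then automatically a union of whole edges, the fine-scale cells are single edges, and (PG1)--(PG3) are immediate.

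A secondary issue: your volume-count argument for the ``deep point'' does not work as stated. Ahlfors regularity does not prevent a cell of diameter $\asymp L^n$ and measure $\gtrsim L^{n\beta}$ from lying entirely within distance $c_0L^n$ of its complement: the shell is a union of possibly many balls of radius $c_0L^n$, so its measure need not be $o(L^{n\beta})$. What you actually need is the inner-ball property $B_d(x^k_n,C_3r^k)\subset K_{w^k_n}$, which is not a consequence of the volume bounds but is built into the Hyt\"onen--Kairema construction; once it is in place, the scaling $d(w)\asymp r^{[w]}$, uniform finiteness, adaptedness and thickness all follow from metric doubling essentially as you describe, which is how the paper concludes.
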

For this partition $K$, we later prove $\ol{d}^s_2(K,M)=d_s(V',E',\mu)<2$ for sufficiently large $M$ and Theorem \ref{theth}.\par
Before proving Lemma \ref{dcube}, we introduce a property of a metric space.
\begin{defi}
	A metric space $(X,\delta)$ is called metric doubling if there exists $N>0$ such that for any $x\in X$ and $r>0,$ there exist $\{x_j\}_{j=1}^N$ with $B_\delta(x,2r)\subset \cup_{j=1}^N B_\delta(x_j, r).$
\end{defi}
\begin{proof}[Proof of Lemma \ref{dcube}]
	For $(x,y),(z,w)\in E'$, $H((x,y),(z,w))$ denotes the Hausdorff distance between them, that is,
	\[(d'(x,z)\wedge d'(x,w))\vee(d'(y,z)\wedge d'(y,w))\vee(d'(x,z)\wedge d'(y,z))\vee(d'(x,w)\wedge d'(y,w)).\] 
	Then $H$ is the distance on $E'/{\sim}$ where $(x,y)\sim(z,w)$ if $\{x,y\}=\{z,w\}.$ Since $d'$ is metric doubling (which follows form Ahlfors regularity) and $(V',E')$ is bounded degree, $H$ is also metric doubling. Fix some $w_*\in E'/{\sim}$ and applying \cite[Theorem 2.2]{HK2012}, we obtain $C_1,C_2>0,$ $\zeta\in(0,1),$ $T_k=\{w^k_n\}_{n\in\Mb{N}}\subset E'/{\sim}\ (k\in\Mb{Z})$ and $Q:\scup_{k\in\Mb{Z}}(T)_k\to\{A\subset E'/{\sim}\}$ with the following properties: for any $k,l\in\Mb{Z}$ with $k\le l$ and any $n,m\in\Mb{N}$, $w^k_1=w_*,$ $E'/{\sim}=\sCup_{n\in\Mb{N}}Q_{w^k_n},$  $B_H(w^k_n,C_1\zeta^k)\subset Q_{w^k_n} \subset B_H(w^k_n,C_2\zeta^k),$ and either $Q_{w^l_m} \subset Q_{w^k_n}$ or $Q_{w^l_m}\cap Q_{w^k_n}=\emptyset$ holds. \par
	 Let $T=\scup_{k\in\Mb{Z}}(T)_k.$ For any $w^k_n\in T,$ let $\pi(w^k_n)$ be the unique vertex in $(T)_{k-1}$ and $K_{w^k_n}\coloneqq\{x\mid (x,y)\in Q_{w^k_n} \text{ for some }y\in V'\},$ then $(T,\pi)$ is a bi-infinite tree and $K$ is a partition of $(V',E').$ Moreover, by definition of $H,$ there exist $C_3,C_4$ and $x^k_n\in V'$ such that
	\begin{equation}\label{qball}
		B_{d'}(x^k_n,C_3\zeta^k)\subset K_{w^k_n}\subset B_{d'}(x^k_n,C_4\zeta^k)
	\end{equation}  
	for any $k\le0$ and $n\in\Mb{N}.$ Here we check that $K$ satisfies the basic framework. \par
	Fix any $x,y\in V'$ with $d'(x,y)<\zeta^k,$ then there exist $\{x_j\}_{j=1}^N\subset B_{d'}(x,\zeta^k)$ for some $N$ such that $x_1=x,x_N=y$ and $(x_j,x_{j+1})\in E$ for any $j<N.$ Let
	\begin{equation}M_*:=\sup_{x\in G}\sup_k \#\{w^k_n\in (T)_k\mid B(x,r^k)\cap K_{w^k_n}\ne\emptyset\}-1,\label{mstar}\end{equation} then $M_*<\infty$ because $(V',d')$ is metric doubling. This shows $\Delta_{M_*}(x,y)\ge k,$ and the other properties also follow from metric doubling condition and \eqref{qball}.
\end{proof}
Note that we can choose $\zeta>0$ such that $T_e=\scup_{k\le0}(T)_{k}.$
\begin{prop}\label{lower}
	Let $M_*$ be defined by \eqref{mstar}. For any fixed $M\ge M_*$, $\Mc{E}_{2,k,w}^M\gtrsim \zeta^{k\alpha}$ for any $k$ and $w=w^l_n\in \scup_{l\le -k}T_l.$
\end{prop}
\begin{proof}
	In this proof, we use the idea of modulus of curves. Let $(V_*,E_*,\mu_*)$ be a weighted graph, $A,B\subset V_*$ with $A_*\cap B_*\ne\emptyset.$ We also let 
	\begin{align*}
		\Mc{P}(V_*,E_*,\mu_*,A,B)&=\biggl\{\{x_j\}_{j=0}^m\biggm\vert
		\begin{minipage}{250pt}
			$m\in\Mb{N}, x_0\in A, x_m\in B $ and $(x_j,x_{j+1})\in E$ for any $0\le j< m$
		\end{minipage}\biggr\},\\
		\Mc{F}(V_*,E_*,\mu_*,A,B)&=\biggl\{f:G\to\Mb{R}\biggm\vert \sum_{j=1}^mf(x_j)\ge1 \text{ for any }\{x_j\}_{j=0}^m\in	\Mc{P}(V_*,E_*,\mu_*,A,B)\biggr\},\\
		\Mc{M}(V_*,E_*,\mu_*,A,B)&=\min \{\textstyle\sum_{x\in V_*} f(x)^2\mu(x)\mid f\in \Mc{F}(V_*,E_*,\mu_*,A,B)\} \text{, which exists.}
	\end{align*}
	For the rest of proof, we write $\Mc{Q}_{w,k}^M$ instead of the quintuplet
	\[T_{[w]+k},E_{[w]+k},\mu_{((T)_{[w]+k},E'_{[w]+k})},\pi^{-k}(w),\Mc{C}^M_{w,k}\]
	where $\Mc{C}^M_{w,k}$ is defined in Definition \ref{Ce}, for simplicity. Since $\sup_{x\in V'}\#\{(x,y)\in E'\}<\infty$ by \eqref{p0}, we know that $R_\mu(A,B)^{-1}\asymp\Mc{M}(V',E',\mu,A,B)$ for any $A,B\subset V'.$ Similarly, $\Mc{E}_{2,k,w}^M\asymp \Mc{M}(\Mc{Q}^M_{w,k})$ for any $k\ge0$ and $w\in \scup_{l\le -k}T_l$ by \eqref{B5}.\par
	Fix any $k\ge0,$ $w\in\scup_{l\le -k}T_l,$ and $f\in\Mc{F}(\Mc{Q}^M_{w,k}).$ We next construct a function $g\in\Mc{F}(V',E',\mu,K_w, A^M_w)$ where $A_w^M:=V'\setminus\cup\{K_v\mid v\in(T_\zeta)_{[w]},\ d_{[w]}(w,v)>M\}$, and prove
	\[C^{-1}\zeta^{-[w]\alpha}\le \sum_{x\in V'}g(x)^2\mu(x)\le C\zeta^{-([w]+k)\alpha}\sum_{x\in T_{[w]+k}}f(x)^2\] for some $C>0$ that is independent of $k,w$ and $f,$ which suffices to show this proposition. Let $g_v$ be the optimal function for $\Mc{M}(V',E',\mu,K_v, A^M_v).$ We define $g:V'\to\Mb{R}$ by 
	\[g(x)=\max\bigl\{\tilde f(v)g_v(x)\bigm\vert v\in T_{[w]+k} \text{ such that }x\in \cup \{K_u\mid d_{[w]+k}(u,v)\le M \}\bigr\}\]
	where $\tilde f(v)\coloneqq 2M\max\{f(u)\mid u\in T_{[w]+k},\ d_{[w]+k}(u,v)\le 2M\}.$
	\begin{cla}
		$g\in\Mc{F}(V',E',\mu,K_w, A^M_w).$
	\end{cla}
	\begin{proof}[Proof of the claim]
		Fix any $\{x_j\}_{j=0}^m\in\Mc{P}(V',E',\mu,K_w, A^M_w).$ Let $j_1=\min\bigl\{j\bigm\vert x_j\in \Cap\{A^M_v\mid \pi^k(v)=w\}\bigr\},$ then $j_1$ is well-defined because $A^M_w\subset \Cap\{A^M_v\mid \pi^k(v)=w\}.$ We inductively define $v_a\in T_{[w]+k}$ as the vertex satisfying $\{x_{j_a-1},x_{j_a}\}\subset K_{v_a}$ and $j_a=\min\{j>j_{a-1} \mid x_j\in A^M_{v_{a-1}}\}.$ Then we have some $a^*\ge1$ such that $x_m\in V'\setminus A^M_{v_{a^*}}$ and
		\begin{align*}
			\sum_{j=1}^m g(x_j)\ge \sum_{a=1}^{a^*}\sum_{j=j_{a-1}+1}^{j_a}\tilde f(v_{a-1})g_{v_{a-1}}(x_j)\ge \sum_{a=1}^{a^*}\tilde f(v_{a-1}).
		\end{align*}
		 where $j_0=0$ and $v_0\in \pi^{-k}(w)$ with $x_0\in K_{v_0}.$ Moreover, there exists $\{ v^*_l\}_{l=0}^{(a^*+1)M}\in \Mc{P}(\Mc{Q}^M_{w,k})$ with $v^*_{aM}=v_a$ for $0\le a\le a^*,$ so
		 \begin{align*}
		 	\sum_{a=1}^{a^*}\tilde f(v_{a-1})\ge\sum_{a=1}^{a^*}\sum_{l=(a-1)M+1}^{aM}f(v^*_l) +\sum_{l=a^*M+1}^{(a^*+1)M} f(v^*_l)\ge1.
		 \end{align*}
	\end{proof}
	By \cite[Lemma 6.6 of the arXiv version]{Sas21} and metric doubling property of $d'$, it follows that for fixed $c>1,$ $R(B_{d'}(x,\phi),B_{d'}(x,c\phi))\asymp \phi^\alpha$ for any $\phi\ge1$ and $x\in V'.$ This and \eqref{B2},\eqref{B3} and \eqref{B4} show that $ R(K_w, A^M_w) \asymp \zeta^{[w]\alpha}$ for any $w\in T_e.$ Therefore
	\begin{align*}
		C^{-1}\zeta^{-[w]\alpha}&\le\Mc{M}(V',E',\mu, K_w, A^M_w)\\
		&\le\sum_{x\in V'}g(x)^2\mu(x)\le(L^*+1)^{M+1}\sum_{v\in T_{[w]+k}}\tilde f(v)^2\sum_{x\in V'}g_v(x^2)\mu(x)\\
		&\le 4M^2(L^*+1)^{3M+2}C\zeta^{-([w]+k)\alpha}\sum_{v\in T_{[w]+k}}f(v)^2
	\end{align*} 
	for some $C>0,$ where $L^*$ is defined by \eqref{B5}. This concludes the proof.
\end{proof}
\begin{prop}\label{upper}
	 For any fixed $M\ge M_*$, $\Mc{E}_{2,k,w}^M\lesssim \zeta^{k\alpha}$ for any $k$ and $w=w^l_n\in \scup_{l\le -k}T_l.$
\end{prop}
\begin{proof}
	We use the argument of flow. By \cite[Lemma 2.5]{BCK2005}, there exist $C,C'>1$ such that for any $k\le 0$ and $u,v\in T_k$ with $K_u\cap K_v\ne\emptyset,$ there exists a unit flow $f_{u,v}$ from $u=w^k_a$ to $v$ (as points of $V'$) satisfying 
	$f_{u,v}(x,y)=0$ whenever $\{x,y\}\not\subset B_d(x^k_a, C\zeta^{k}),$ and 
	$2^{-1}\sum_{(x,y)\in E'}f_{u,v}(x,y)^2\mu(x,y)\le C'\zeta^{k\alpha}.$
	Additionally, since $(V',d')$ is metric doubling, 
	\[\sup\{\#\{(w^k_a,w^k_b)\in J_k\mid x\in B_{d'}(x^k_a, C\zeta^{k}) \}\mid x\in V', k\le0\}<\infty\]
	same as Lemma \ref{dcube}. Similarly to Lemma \ref{cur} and Proposition \ref{lower}, this shows
	$C^{-1}\zeta^{[w]\alpha}\le 2^{-1}\sum_{(x,y)\in E'}f(x,y)^2\mu(x,y) \le C\zeta^{([w]+k)\alpha}(\Etkw{w}^M)^{-1}$
	for some $C>0$ and flow $f$ from $K_w$ to $A^M_w.$  This concludes the proof.
\end{proof}
\begin{proof}[Proof of Theorem \ref{theth}]
	Let $M$ be sufficiently large. By Proposition  \ref{upper}, it is easy to see that $\Mc{E}_{2,k,w}^M\lesssim \zeta^{((-[w])\vee 0)\alpha+k-(-[w]\vee 0)}$ for $w\in T_r\setminus \scup_{l\le -k}T_l.$ Considering the resistance restricted on a path, we obtain that $\alpha\le 1.$ Therefore $\lim_{k\to\infty}\sup_{w\in T_r}(\Mc{E}_{2,k,w}^M)^{1/k}=\zeta^\alpha$ by Propositions \ref{lower} and \ref{upper}. On the other hand, $\ol{N}_*=\zeta^{-\beta}$ because $n^\beta\asymp \mu(B_d(x,n)),$ \eqref{B5} and \eqref{qball} hold. Therefore  $ \ol{d}^s_2(K,M)=2\beta/(\alpha+\beta)=d_s(V',E',\mu)<2$, where the third equation follows from \cite[Theorem 1.3]{BCK2005}. This with Theorem \ref{Smain} suffices to prove the statement.
\end{proof}

\appendix
\section{Proof of $R_{n,\Mr{TB}}^{\mathrm{SC}}\gtrsim R_{n,\Mr{Pt}}^{\mathrm{SC}}$}\label{app}
Let $D_n=(\Phi_1)^n(\{p_0,p_2,p_4,p_6,p_8\})$ and $B_n=\{(x,y)\in D_n\times D_n \mid |x-y|=3^{-n}2^{-1}\}.$
We say $g:D_n\to\Mb{R}$ is harmonic on $A\subset D_n$ if $g(x)=\sum_{y: (x,y)\in D_n} g(y)/\mu_{(D_n,B_n)}(x)$ for any $x\in A.$
In this section, we will use the following fact which can be proved in a similar way to \cite[Theorem3.1]{BB89} (see also \cite[Theorem 4.4]{BB99}). 
\begin{prop}[Harnack's inequality for the graphical Sierpi\'nski carpet]\label{Har}
There exists $C>0$ such that any $n\ge0$ and any nonnegative harmonic function $f$ on 
$\{z\in D_n\mid \mre(z) \vee \mim(z)\ne\frac{1}{2}\}$ satisfy $cf(x)\ge f(y)$ for any $x,y\in D_n\cap\varphi_1(S).$
\end{prop}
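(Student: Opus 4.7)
The plan is to adapt the probabilistic argument of Barlow--Bass \cite{BB89} to the graph setting, following the scheme of \cite{BB99}. Let $\{Y_k\}$ denote the simple random walk on $(D_n, B_n)$, and let $\tau = \inf\{k \ge 0 \mid \mre(Y_k) \vee \mim(Y_k) = 1/2\}$ be the first hitting time of the ``free face''. Since $f \ge 0$ is harmonic off this face, the optional stopping theorem gives $f(x) = \Mb{E}^x[f(Y_\tau)]$ for every $x \in D_n$ with $\mre(x) \vee \mim(x) \ne 1/2$ (using that $D_n$ is finite, so $\tau < \infty$ a.s.\ and $f$ is bounded). Thus, to prove the desired inequality for $x, y \in \varphi_1(I) \cap D_n$, it suffices to establish a Radon--Nikodym type domination of exit distributions, namely to find $c > 0$ independent of $n$ such that
\[
\Mb{P}^x(Y_\tau = z) \ge c\, \Mb{P}^y(Y_\tau = z) \qquad \text{for every } z \text{ on the free face.}
\]

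To obtain such a bound I would construct a coupling $(Y^x_k, Y^y_k)$ of the walks started at $x$ and $y$ that synchronizes with uniformly positive probability strictly before either walk reaches the free face. The coupling is built inductively in the scale $n$, exploiting the self-similarity of the carpet: the coarse-grained walk obtained by sampling $Y_k$ at its successive visits to distinct $3^{-1}$-cells $\varphi_j(I)$ is, up to a time change, a random walk on $(D_{n-1}, B_{n-1})$, so the induction hypothesis gives uniform hitting probability estimates for the movement between cells. Within a common cell, I would use a reflection coupling along the symmetry axes of $\varphi_j(I)$ to collapse the discrepancy, as in \cite[Theorem~3.1]{BB89}. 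Because $\varphi_1(I)$ is separated from the free face by at least one full cell, the walks have a definite number of ``cell crossings'' available before $\tau$, each contributing a uniform synchronization probability; after a finite, $n$-independent, number of attempts the walks meet with probability bounded below by some $c > 0$.

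The main obstacle is to verify that the coupling yields a lower bound on the synchronization probability that does \emph{not} degenerate as $n \to \infty$. In the continuous Brownian setting of \cite{BB89} this is almost automatic from the exact symmetries of reflected Brownian motion, but in our discrete setting one must cope with two issues: the vertex sets $D_n = \varphi^n(\{p_0, p_2, p_4, p_6, p_8\})$ omit the cell corners $p_1, p_3, p_5, p_7$, so the cell structure has only discrete symmetries; and the exit distribution of the walk from a cell has non-trivial micro-fluctuations that can obstruct a naive reflection coupling. The remedy, following \cite[Sec.~4]{BB99}, is to perform the coupling not point-by-point but on a slightly coarsened scale: one lets the two walks mix within an $\varepsilon 3^{-n}$-block (for some fixed small $\varepsilon > 0$) before attempting to match them, and uses the inductively-established uniform estimates on the ``elliptic'' hitting probabilities of such blocks to get a definite lower bound at each scale, closing the induction.
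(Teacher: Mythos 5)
Your proposal follows essentially the same route as the paper, which does not write out a proof but simply notes that the proposition ``can be proved in a similar way to \cite[Theorem 3.1]{BB89} (see also \cite[Theorem 4.4]{BB99})''---that is, exactly the coupling argument for the (graphical) Sierpi\'nski carpet that you sketch, with the probabilistic representation $f(x)=\Mb{E}^x[f(Y_\tau)]$, reflection/corner couplings within cells, and self-similarity to keep the synchronization probability bounded below uniformly in $n$. Your sketch correctly identifies the one genuinely delicate point (non-degeneracy of the coupling bound as $n\to\infty$) and defers it to the same sources the paper cites, so there is nothing to compare beyond noting that you have expanded the citation the author left implicit.
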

\begin{figure}[tb]
	\centering
	\begin{minipage}[b]{0.45\linewidth}
		\centering
		\begin{tikzpicture}[scale=1.2]
			\begin{scope}
				\clip (3,0) --(0,0)--(0,3) --cycle;
				\foreach \t in {-14,-13,...,21}{
					\path[draw] (0.15*\t, -0.3) -- (0.15*\t + 1.2, 3.3);
				}
			\end{scope}
			\draw[dashed] (0,0) --(3,0) -- (3,3) -- (0,3) --cycle;
			\draw (-0.1,3.1) -- (3.1,-0.1) node[below ]{$\mim(z)=-\mre(z)$};
			\coordinate[label=right :$1$] (a1) at (3,2.833) ;
			\coordinate[label=above :$1$] (a2) at (2.833,3) ;
			\foreach \n in {1,2,3,4}
			\coordinate (b\n) at ($(3,3)-0.167*(\n,1)$);
			\foreach \n in {5,6,7}
			\coordinate (b\n) at ($(3,3)-0.167*(1,\n-3)$);
			\draw (a1) -- (b4);
			\draw[dotted,thick] (b4) -- ($(b4)-(0.3,0)$);
			\draw[dotted,thick] (b7) -- ($(b7)-(0,0.3)$);
			\draw (a2) -- (b7);
			\draw ($(b3)-(a1)+(3,3)$) -- ($(b3)+(a1)-(3,3)$);
			\draw ($(b6)-(a2)+(3,3)$) -- ($(b6)+(a2)-(3,3)$);
			\foreach \n in {1,2}
			\filldraw (a\n) circle [x radius=0.05, y radius=0.05];
			\foreach \n in {1,...,7}
			\filldraw[white] (b\n) circle [x radius=0.01, y radius=0.01];
			\filldraw[white] (1,1) circle [x radius=0.15, y radius=0.24];
			\draw (1,1) node{0};
		\end{tikzpicture}
		\vskip-5pt
		\caption{Areas where $\tilde{R}_{n,\tr}$ denotes the resistance}\label{Rtri}
	\end{minipage}
	\begin{minipage}[b]{0.45\linewidth}
		\centering
		\begin{tikzpicture}[scale=1.2]
			\draw(0,0) --(3,0) -- (3,3) -- (0,3) --cycle;
			\filldraw(1.1,1.1) -- (1.1,1.9) -- (1.9,1.9) -- (1.9,1.1) --cycle ;
			\filldraw(0.25,0.25)--(0.5,0.25) -- (0.5,0.5) -- (0.25,0.5) --cycle;
			\filldraw(2.5,0.25)--(2.75,0.25) -- (2.75,0.5) -- (2.5,0.5) --cycle;
			\filldraw(2.5,2.5)--(2.75,2.5) -- (2.75,2.75) -- (2.5,2.75) --cycle;
			\coordinate (a1) at (0,0);
			\coordinate (a2) at (0.25,0.25);
			\coordinate (a3) at (0.5,0);
			\coordinate (a4) at (0.75,0.25);
			\coordinate (a5) at (2.25,0);
			\coordinate (a6) at (2.5,0.25);
			\coordinate (a7) at (2.75,0);
			\coordinate (a8) at (3,0.25);
			\coordinate (a9) at (2.75,0.5);
			\coordinate (a10) at (3,0.75);
			\coordinate (a11) at (2.75,2.25);
			\coordinate (a12) at (3,2.5);
			\coordinate (a13) at (2.75,2.75);
			\coordinate (a14) at (3,3);
			\draw (a1) -- (a2) -- (a3) --(a4);
			\draw[dotted,thick](0.75,0.125) -- (1,0.125) ;
			\draw[dotted,thick](2.25,0.125) -- (2,0.125) ;
			\draw[dotted,thick](2.875,0.75) -- (2.875,1) ;
			\draw[dotted,thick](2.875,2.25) -- (2.875,2) ;
			\draw (a5)--(a6) -- (a7) -- (a8) -- (a9) --(a10);
			\draw (a11) -- (a12) -- (a13) -- (a14);
			\foreach \n in {1,...,14}
			\filldraw (a\n) circle [x radius=0.05, y radius=0.05];
		\end{tikzpicture}
		\caption{$2\cdot3^k$ points in $G^{\mathrm{SC}}_{n+k}$ whose resistance with the nearest point in themselves is less than $R_{n,\Mr{Pt}}^{\mathrm{SC}}$}
		\label{TI}
	\end{minipage}
\end{figure}
\begin{proof}[Proof of Proposition \ref{resSC}]
	As we mentioned before, it suffices to show $R_{n,\Mr{TB}}^{\mathrm{SC}}\gtrsim R_{n,\Mr{Pt}}^{\mathrm{SC}}.$ Let $\tilde{R}_{n,\Mr{TB}}$ and $\tilde{R}_{n,\tr}$ denote
	\begin{align*}
		\tilde{R}_{n,\Mr{TB}}&=R_{(D_n,B_n)}(D_n\cap S_T, D_n\cap S_B),\\
		\tilde{R}_{n,\tr}&=R_{(D_n,B_n)}\biggl(\biggl\{p_1-\frac{\iu}{2\cdot3^n}, \ p_1-\frac{1}{2\cdot3^n}\biggr\},D_n\cap\{z\mid \mre(z)+\mim(z)=0\}\biggr)
	\end{align*}
	respectively (see  Fig. \ref{Rtri}). It is easy to see that $\tilde{R}_{n,\Mr{TB}}\asymp R_{n,\Mr{TB}}^{\mathrm{SC}}$ and $\tilde{R}_{n,\tr}\asymp R_{n,\Mr{Pt}}^{\mathrm{SC}}:$ the latter follows from $f^{\Mr{SC}}_n(z)-1/2=-f^{\Mr{SC}}_n(-z)+1/2,$ where $f^{\Mr{SC}}_n$ is the optimal function for $R^{\Mr{SC}}_{n,\Mr{Pt}}.$\par Let $g_n$ be the optimal function for $\tilde{R}_{n,\tr},$ then $g_n$ is harmonic on 
	\[D_n\cap\{z\mid \mre(z)+\mim(z)>0\}\setminus\{p_1-\frac{\iu}{2\cdot3^n},\ p_1-\frac{1}{2\cdot3^n}\}.\]
	Here we fix $k\ge0$ such that $\tilde{R}_{n+k,\tr}\ge2\tilde{R}_{n,\tr}$ for any $n\ge0$ ($R_{n+k,\Mr{Pt}}^{\mathrm{SC}}\gtrsim R_{n,\Mr{Pt}}^{\mathrm{SC}} R_{k,\Mr{TB}}^{\mathrm{SC}}$ and $\rho^n\asymp R_{n\Mr{TB}}^{\mathrm{SC}}$ assure the existence of such $k$), and define the function $h_n$ on $D_{n+k+2}$ by
	\[h_n(x)=\begin{cases}
		\tilde{R}_{n+k+2,\tr}\cdot g_{n+k+2}(x)-\tilde{R}_{n+2,\tr}\cdot g_{n+2}(\varphi^{-k}_1(x)) & \text{if } x\in \varphi^k_1(S),\\
		\tilde{R}_{n+k+2,\tr}\cdot g_{n+k+2}(x) & \text{otherwise.}
	\end{cases}\]
	Since $g_n(p_1-2^{-1}3^{-n})-g_n(p_1-2^{-1}3^{-n}(1+\iu))=(\tilde{R}_{n,\tr})^{-1}/2,$ $h_n$ is harmonic on $D_{n+k+2}\cap\varphi^k_1(\{z\mid \mre(z)+\mim(z)>0 \})$, especially on $ D_{n+k+2}\cap\varphi_1^{k+1}(S).$ Moreover, since
	$h_n(x)=\tilde{R}_{n+k+2,\tr}\cdot g_{n+k+2}(x)-0\ge0$ on $D_{n+k+2}\cap\varphi_1^k(\{\mim(z)=-\mre(z)\}),$ $h_n$ is nonnegative.
	Therefore there exists $C>0$ such that
	$Ch_n(x)\ge h_n(p_1-2^{-1}3^{-n})=\tilde{R}_{n+k+2,\tr}-\tilde{R}_{n+2,\tr}\ge\tilde{R}_{n+2,\tr}$ for any $x\in D_{n+k+2}\cap\varphi_1^{k+2}(S)$ because of Proposition \ref{Har}. This shows
	\begin{align*}
		&R_{(D_{n+k+2},B_{n+k+2})}(\varphi^{k+2}_1(D_n),\varphi^{k+2}_5(D_n))\\
		\ge&C^{-2}(\tilde{R}_{n+2,\tr})^2(2((\tilde{R}_{n+k+2,\tr})^{2-1}+(\tilde{R}_{n+2,\tr})^{2-1}))^{-1}\\
		\ge&C^{-2}\frac{1}{4}(\tilde{R}_{n+2,\tr})^2(\tilde{R}_{n+k+2,\tr})^{-1}.
	\end{align*}
	On the other hand, by the potential theoretic argument as in \cite[Theorem4.3]{BB90},
	\begin{align*}
		&R_{(D_{n+k+2},B_{n+k+2})}(\varphi^{k+2}_1(D_n),\varphi^{k+2}_5(D_n))\\
		\lesssim &
		R_{(D_{k+2},B_{k+2})}(\varphi^{k+2}_1(D_0),\varphi^{k+2}_5(D_0))\tilde{R}_n
	\end{align*}
	for any $n\ge0.$ We also obtain that $(2\cdot3^k-1)R_{n,pt}^{\mathrm{SC}}\ge R_{n+k,pt}^{\mathrm{SC}}$ by the triangle inequality of the resistance metric (see  Fig. \ref{TI}). Therefore we obtain
	\[R_{n,\Mr{TB}}^{\mathrm{SC}}\gtrsim R_{n-2,\Mr{TB}}^{\mathrm{SC}}\gtrsim\tilde{R}_{n-2,\Mr{TB}}\gtrsim(\tilde{R}_{n,\tr})^2/\tilde{R}_{n+k,\tr} \gtrsim (R_{n,\Mr{Pt}}^{\Mr{SC}})^2/R_{n+k,\Mr{Pt}}^{\Mr{SC}}\gtrsim R_{n,\Mr{Pt}}^{\mathrm{SC}}\] for any $n\ge2.$
\end{proof}

\bigskip
\address{
	Research Institute for Mathematical Sciences \\ 
	Kyoto University \\
	Kitasirakawa Oiwakecho \\
	Kyoto 606-8502 \\
	Japan
}
{ksasaya@kurims.kyoto-u.ac.jp}
\end{document}